\documentclass[12pt, a4paper]{amsart}
\usepackage{amssymb}
\usepackage{textcomp}
\usepackage{mathrsfs}

\usepackage[all]{xy}
\CompileMatrices

\topmargin0pt
\oddsidemargin0pt
\evensidemargin0pt
\textheight660pt
\textwidth445pt

\frenchspacing

\theoremstyle{plain}

\newtheorem{theorem}{Theorem}[section]
\newtheorem{lemma}[theorem]{Lemma}
\newtheorem{fact}[theorem]{Fact}
\newtheorem{corollary}[theorem]{Corollary}
\newtheorem{proposition}[theorem]{Proposition}

\theoremstyle{definition}
\newtheorem{definition}[theorem]{Definition}
\newtheorem{example}[theorem]{Example}

\newtheorem{question}[theorem]{Question}

\newtheorem{problem}[theorem]{Problem}
\newtheorem{remark}[theorem]{Remark}

\theoremstyle{remark}

\newtheorem*{claim}{Claim}
\newtheorem*{acknowledgements}{Acknowledgements}

\numberwithin{figure}{section}
\numberwithin{equation}{section}

\DeclareMathOperator{\im}{Im}
\DeclareMathOperator{\UT}{UT}
\DeclareMathOperator{\GL}{GL}
\DeclareMathOperator{\Th}{Th}

\newcommand{\G}{{\mathcal G}}
\renewcommand{\H}{{\mathcal H}}
\newcommand{\Rr}{{\mathcal R}}
\newcommand{\N}{{\mathbb N}}
\newcommand{\F}{{\mathbb F}}
\newcommand{\Z}{{\mathbb Z}}
\newcommand{\R}{{\mathbb R}}
\newcommand{\Q}{{\mathbb Q}}

\newcommand{\Lan}{{\mathcal L}}


\begin{document}

\title[Model theoretic connected components of nilpotent groups]{Model theoretic connected components \\ of finitely generated nilpotent groups}
\date{\today}

\author{Nathan Bowler \and Cong Chen \and Jakub Gismatullin}
\thanks{The second author is funded by EPSRC Doctoral Training Grant (DTG) Reference EP/P505593/1. The third author is supported by the Marie Curie Intra-European Fellowship MODGROUP no. PIEF-GA-2009-254123 and Polish Goverment MNiSW grant N N201 545938}

\address{Centre for Mathematical Sciences, University of Cambridge, Wilberforce Road, Cambridge CB3 0WA, United Kingdom.}
\email{sfwc@hotmail.com}

\address{School of Mathematics, University of Leeds, Woodhouse Lane, Leeds, LS2 9JT, UK}
\email{mmcc@leeds.ac.uk}

\address{School of Mathematics, University of Leeds, Woodhouse Lane, Leeds, LS2 9JT, UK}
\address{and}
\address{Instytut Matematyczny Uniwersytetu Wroc{\l}awskiego, pl. Grunwaldzki 2/4, 50-384 Wroc{\l}aw, Poland}
\email{gismat@math.uni.wroc.pl, www.math.uni.wroc.pl/\~{}gismat}

\keywords{model theoretic connected components, finitely generated group, thick set, van der Waerden theorem}
\subjclass[2010]{Primary 03C60, 20F16; Secondary 05E15, 20A15.}

\begin{abstract}
We prove that for a finitely generated infinite nilpotent group $G$ with structure $(G,\cdot,\ldots)$, the connected component ${G^*}^0$ of a sufficiently saturated extension $G^*$ of $G$ exists and equals $\bigcap_{n\in\N} \left\{g^n : g\in G^*\right\}$. We construct an expansion of $\Z$ by a predicate $(\Z,+,P)$ such that the type-connected component ${\Z^*}^{00}_{\emptyset}$ is strictly smaller than ${\Z^*}^0$. We generalize this to finitely generated virtually solvable groups. As a corollary of our construction we obtain an optimality result for the van der Waerden theorem for finite partitions of groups.
\end{abstract}

\maketitle

\section{Introduction}

Thoroughout this paper we consider an infinite group $G$ and an expansion $(G,\cdot,\ldots)$ of it by some additional structure in the language $\Lan=\{\cdot,\ldots\}$. The symbol $\cdot$ is the group operation and $\ldots$ denotes some additional relations, functions and constants.
%
Following standard usage in model theory, $G^*$ stands for a \emph{sufficiently saturated} elementary extension of $G$: one that is $\kappa$-saturated and $\kappa$-strongly homogeneous for some sufficiently big cardinal $\kappa$.

In model theory we consider several kinds of \emph{model-theoretic connected components} of such structures $G$. Assume that $G$ is sufficiently saturated and let $A\subset G$ be a \emph{small} set of parameters, that is one of cardinality less than $\kappa$. We define (see e.g. \cite[Definition 2.1]{modcon}): 
\begin{itemize}
\item the \emph{connected component of $G$ over $A$}, denoted by $G^0_A$, as the intersection of all $A$-definable subgroups of $G$ which have finite index in $G$,
\item the \emph{type-connected component of $G$ over $A$}, denoted by $G^{00}_A$, as the smallest subgroup of bounded (that is, smaller than $\kappa$) index in $G$ that is type-definable over $A$,
\item the \emph{$\infty$-connected component of $G$}, denoted by $G^{\infty}_A$, as the smallest subgroup of bounded index in $G$ that is invariant under the automorphisms of $G$ fixing $A$ pointwise ($A$-invariant in short).
\end{itemize}
These groups are normal subgroups of $G$ and satisfy $G^{\infty}_A\subseteq G^{00}_A\subseteq G^0_A$. If for every small $A\subset G$ we have $G^0_A=G^0_{\emptyset}$, then we say that $G^0$ (without subscript) exists and we call this group the \emph{connected component of $G$}. Similarly we define the \emph{type-connected component $G^{00}$ of $G$} and the \emph{$\infty$-connected component $G^{\infty}$ of $G$}. If $G^{\infty}$ exists, then $G^{00}$ and $G^0$ also exist (see e.g. \cite[Section 5]{modcon}).

One reason for studying these components is that $G/G^{0}$, $G/G^{00}$ and $G/G^{\infty}$ with a suitable topology, called the \emph{logic topology}\footnote{see e.g. \cite[Section 2]{pill} and \cite[Proposition 3.5 1.]{gomp}, where the symbol $G_L$ is used to denote $G^{\infty}_{\emptyset}$} are compact topological groups which are invariants of the first order theory $\Th(G,\cdot,\ldots)$ of $(G,\cdot,\ldots)$. Not much is known about them for saturated elementary extensions of finitely generated groups, as the property of being finitely generated is not preserved under elementary extension.

In section 2 of this paper we define thick sets and use them to characterize the components as in \cite{modcon}.

We are interested in the class of finitely generated nilpotent groups because as we show in Corollary \ref{cor:poi} every subgroup of finite index in a group elementarily equivalent to such a group is definable. 

We can try to describe these finite index subgroups more concretely as follows. Every finite index subgroup of a finitely generated group is itself finitely generated \cite[14.3.2]{km}. The groups $\UT_n(\Z)$ of unitriangular matrices over $\Z$ are examples of torsion-free finitely generated nilpotent groups. In fact, every torsion-free finitely generated nilpotent group can be homomorphically embedded into $\UT_n(\Z)$, for suitable natural $n$ \cite[17.2.5]{km}. In general, by \cite[17.2.2]{km}, an arbitrary finitely generated nilpotent group has a finite index subgroup which is torsion-free (also finitely generated and nilpotent).


A group is virtually nilpotent when it has a nilpotent subgroup of finite index; similarly it is virtually solvable when it has a solvable subgroup of finite index.  Putting all  of this together, we see that $G$ is a finitely generated virtually nilpotent group if and only if it has a finite index subgroup which can be homomorphically embedded into one of the groups $\UT_n(\Z)$.

We describe when ${G^*}^0$ exists and prove in Theorem \ref{thm:divisible} that in an infinite finitely generated virtually nilpotent group ${G^*}^0$ is divisible and of the form \[\bigcap_{n\in\N} \left\{g^n : g\in G^*\right\}.\] We show in Corollary \ref{cor:cong} that ${G^*}^0$ cannot be divisible for a virtually solvable group $G$ unless it is virtually nilpotent, using results from \cite{hrush}.

Consider the additive group of integers $(\Z,+)$. Since the theory of $(\Z,+)$ is stable, it is well known that in a saturated extension $\Z^*$ of $(\Z,+)$, the groups ${\Z^*}^0$, ${\Z^*}^{00}$ and ${\Z^*}^{\infty}$ exist and coincide with $\bigcap_{n\in\N} n{\Z^*}$. Furthermore, the same is true for any definable group $G$ in $(\Z,+)$: ${G^*}^0={G^*}^{00}={G^*}^{\infty}$. In particular, one can apply this to $G=(\Z^n,+)$. We show that exactly the same is true for Presburger arithmetic in \ref{cor:pres}.

We identify groups admitting homomorphisms with dense image into $\R/\Z$ in section 4. These include abelian groups of unbounded exponent. We show in Proposition \ref{prop:dense} that these homomorphisms allow us to construct expansions of groups such that ${G^*}^{00}_{\emptyset} \lneqq {G^*}^0_{\emptyset}$. We do this by adding a new predicate which gives new type-definable subgroups of finite index. We then construct such homomorphisms for $(\Z, +)$ and more general groups. In an extension of this for finitely generated virtually solvable groups we find finite index subgroups admitting those maps (\ref{cor:ab}). In particular this shows that if we expand $(\Z,+)$ by a predicate $(\Z,+,P)$ the type-connected components are no longer easily characterised. Such a predicate can be defined in First Order Arithmetic, giving a similar result (\ref{cor:peano}).




In section 5 we apply some techniques developed earlier in the paper to show some results using no additional model theory. These provide some limits to claims in the style of the van der Waerden theorem about explicit partitions that do not contain certain subgroups, first for $\Z$ and then in more general cases. We do, however, obtain a positive result of this kind in groups whose cardinality is greater than that of the continuum.

\section{Basic results}


For a subset $X$ of a group $G$ we write $X^n$ for $\underbrace{X\cdot \ldots \cdot X}_{n\text{ times}}$.

In what follows we need another description of ${G^*}^{\infty}_A$ (Fact \ref{fact:known}$(2)$), which comes from \cite[Section 3]{modcon}: this subgroup can be described in terms of definable thick subsets.

\begin{definition}{\cite[Definition 3.1]{modcon}} Suppose $P$ is a subset of an arbitrary group $G$ and $n\in\N$.
\begin{enumerate}
\item $P$ is called \emph{$n$-thick} if it is symmetric (that is $P = P^{-1}$) and for every $n$-sequence $g_1,\ldots,g_n$ of elements of $G$, there are $1\leq i<j \leq n$ such that $g_i^{-1}g_j \in P$ (we do not require $g_0,\ldots,g_{n}$ to be pairwise distinct). $P$ is \emph{thick} if it is $n$-thick for some natural $n$.
\item $P$ is \emph{right [resp. left] $n$-generic} if at most $n$ right [resp. left] translates of $P$ by elements of $G$ cover the whole group $G$. $P$ is \emph{generic} if it is right $n$-generic for some natural $n$.
\end{enumerate}
\end{definition}

We recall some basic properties of thick sets (see \cite[Lemma 3.2]{modcon}).

\begin{lemma} \label{lem:thick}
\begin{enumerate}
\item The intersection of any two thick subsets is thick. More precisely if $P$ is $n$-thick and $Q$ is $m$-thick, then $P\cap Q$ is $R(n,m)$-thick, where $R(n,m)$ is a Ramsey number.
\item If $H$ is a subgroup of $G$ and $P\subseteq G$ is thick, then $P\cap H$ is thick in $H$.
\item The preimage of any $n$-thick set under any homomorphism is again $n$-thick. The image of any $n$-thick set under any surjective homomorphism is again $n$-thick.
\item For $n\geq 2$, if $P$ is $n$-thick, then $P$ is right and left $(n-1)$-generic.
\item If $P$ is right [resp. left] $n$-generic, then $P^{-1}P$ [resp. $PP^{-1}$] is $(n+1)$-thick.
\end{enumerate}
\end{lemma}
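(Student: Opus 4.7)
My plan is to handle each part separately by a short direct argument. For (1), the key tool is a two-colour Ramsey argument. Given $N = R(n,m)$ elements $g_1,\ldots,g_N$, I colour the pair $\{i,j\}$ with $i<j$ red if $g_i^{-1}g_j\in P$ and blue otherwise. A blue clique of size $n$ would immediately contradict $n$-thickness of $P$, so Ramsey's theorem yields a red clique of size $m$; in this clique every pairwise ratio lies in $P$, and $m$-thickness of $Q$ then supplies one pair whose ratio lies in $Q$ as well, hence in $P\cap Q$. Symmetry of $P\cap Q$ is immediate since $(P\cap Q)^{-1}=P^{-1}\cap Q^{-1}$. Parts (2) and (3) are essentially formal: for (2), an $n$-tuple in $H$ is also an $n$-tuple in $G$, and the witnessing ratio $h_i^{-1}h_j$ lies in $H$ since $H$ is a subgroup; for (3), given a homomorphism $f\colon G\to G'$, I push an $n$-tuple forward through $f$ (preimage case) or lift one through a surjective $f$ (image case), applying $n$-thickness in the appropriate group and using $f(g^{-1}h)=f(g)^{-1}f(h)$ together with preservation of inversion.

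For (4), I plan to argue contrapositively and inductively construct a bad tuple. Suppose $P$ is not left $(n-1)$-generic; then no $k\leq n-1$ translates $g_1P,\ldots,g_kP$ cover $G$ (fewer translates could only cover less). Setting $g_1=e$ and at each step choosing $g_{k+1}\notin\bigcup_{i\leq k}g_iP$ forces $g_i^{-1}g_{k+1}\notin P$ for all $i\leq k$; after $n-1$ such steps the tuple $(g_1,\ldots,g_n)$ violates $n$-thickness of $P$, a contradiction. The right-generic half is symmetric, obtained by replacing each $g_i$ with $g_i^{-1}$ (equivalently, using the form of $n$-thickness with $g_ig_j^{-1}$ in place of $g_i^{-1}g_j$, which is clearly equivalent to the original).

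For (5), a pigeonhole argument on the covering translates suffices: if $n$ translates of the appropriate form cover $G$, then among any $n+1$ elements two of them, say $h_i$ and $h_j$ with $i<j$, must lie in a common translate. Computing their ratio within that translate cancels the translating element and produces a member of $P^{-1}P$ or $PP^{-1}$ as dictated by the side (i.e.\ $h_i = gp_i$, $h_j = gp_j$ gives $h_i^{-1}h_j = p_i^{-1}p_j$), which matches the statement; both $P^{-1}P$ and $PP^{-1}$ are visibly symmetric. There is no substantive obstacle here — the only ingredients that need to be spotted are the Ramsey colouring in (1) and the inductive construction of a witness tuple in (4), both of which are standard in this circle of ideas.
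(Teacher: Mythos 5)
Your arguments are correct and constitute exactly the standard proofs that the paper's terse justification points to: a two-colour Ramsey argument for (1) (explaining the appearance of $R(n,m)$), routine verifications for (2)--(3), an inductive construction of a tuple violating thickness for the contrapositive of (4) (the content of the cited lemma), and pigeonhole on covering translates for (5). The only detail worth cross-checking against the source is the left/right convention for translates of $P$, but since (4) concludes both genericities and (5) has a symmetric twin, your treatment covers both cases and the substance is unaffected.
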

\begin{proof} The proof of $(1)-(3)$ is standard. $(5)$ follows by a pigeonhole principle. The proof of $(4)$ is in \cite[Lemma 3.2(4)]{modcon}.
\end{proof}

If a subgroup $H$ of $G^*$ is type-definable over $A$, then one can find a family of $A$-definable subsets $\{P_i\}_{i\in I}$ of $G^*$ such that $H = \bigcap_{i\in I}P_i$ and
\begin{enumerate}
\item $P_i = P_i^{-1}$, for $i\in I$,
\item for every $i\in I$, there is $j\in I$ such that $P_j\cdot P_j\subseteq P_i$.
\end{enumerate}
Conversely, if a family of $A$-definable subsets $\{P_i\}_{i\in I}$ satisfies $(1)$ and $(2)$, then $\bigcap_{i\in I}P_i$ is a type-definable subgroup of $G^*$.

We shall use the following result from \cite{modcon}.

\begin{lemma}{\cite[Lemma 3.6]{modcon}} \label{lem:gen}
If $P$ is a right $m$-generic subset of a group $G$, $1 \in P$ and $P = P^{-1}$, then $P^{3m-2}$ is a subgroup of $G$ of index at most $m$.
\end{lemma}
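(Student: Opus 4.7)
My plan is to show $P^{3m-2}=\langle P\rangle$, the subgroup of $G$ generated by $P$; once this is established, the subgroup property is free, and the covering $G=\bigcup_{i=1}^m Pg_i$ immediately gives $[G:\langle P\rangle]\le m$ (and hence $[G:P^{3m-2}]\le m$). I will first normalize: since $1\in G$ forces some $g_i\in P$, I replace each $g_i$ by $g_i g_1^{-1}$ so that $g_1=1$ and $Pg_1=P$. It then remains to show that every $g\in\langle P\rangle$ can be written as a product of at most $3m-2$ elements of $P$.

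I will argue by contradiction: assume $g=p_1\cdots p_L$ with $L$ minimal and $L\ge 3m-1$, and look at the $L+1$ partial products $q_j:=p_1\cdots p_j$, choosing for each $j$ a coset $Pg_{\sigma(j)}$ that contains $q_j$. The basic surgery I intend to use is: whenever $\sigma(a)=\sigma(c)$ with $a<c$, the equalities $q_a=pg_{\sigma(a)}$, $q_c=p'g_{\sigma(a)}$ give $q_a q_c^{-1}=pp'^{-1}\in P^2$, and prepending $(q_a q_c^{-1})^{-1}\in P^2$ in front of $(p_1\cdots p_a)(p_{c+1}\cdots p_L)$ rewrites $g$ as a product of $L-(c-a)+2$ elements of $P$---a strict shortening provided $c-a\ge 3$. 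A pigeonhole on the $L+1\ge 3m$ values of $\sigma$ produces a coset with at least three $q_j$'s; if some coset receives four or more, the extreme positions among them already give spread $\ge 3$, yielding the contradiction immediately.

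The main obstacle will be the rigid case $L=3m-1$ in which every coset receives exactly three $q_j$'s at three consecutive positions, forcing $\sigma^{-1}(j)$ to partition $\{0,\ldots,3m-1\}$ into the blocks $T_i=\{3i,3i+1,3i+2\}$. To break it I will exploit any freedom left in choosing $\sigma$: if some $q_k$ happens to also lie in a second coset $Pg_{j'}$, I reassign it there, and a short arithmetic check (comparing the triple index of $k$ with that of $j'$) shows that the enlarged preimage of $j'$ always has spread $\ge 3$, reducing back to the solvable case. If no such freedom exists then every $q_j$ sits in a unique coset; in particular $q_0=1$ determines the unique $j$ with $g_j\in P$, which by the normalization must be $j=1$. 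Then $q_0,q_1,q_2$ all lie in $Pg_1=P$, so $p_1p_2=q_2\in P$, and merging the first two factors expresses $g$ as a product of $L-1$ elements of $P$---the desired contradiction. Converting the rigidity of the coset assignment into this concrete short relation $p_1p_2\in P$ is the technical crux of the argument.
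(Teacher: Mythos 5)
The paper does not prove this lemma; it only cites it as \cite[Lemma 3.6]{modcon}, so there is no in-paper proof to compare against. Your self-contained argument is correct. The structure is: normalize $g_1=1$; take a hypothetical $g\in\langle P\rangle$ with minimal $P$-length $L\ge 3m-1$ and colour the partial products $q_0,\dots,q_L$ by which translate $Pg_i$ they land in; whenever two partial products $q_a,q_c$ ($a<c$) share a colour, $q_cq_a^{-1}\in P^2$, and the rewriting $g=(q_cq_a^{-1})\cdot(p_1\cdots p_a)\cdot(p_{c+1}\cdots p_L)$ has length $L-(c-a)+2$, so any colour class of spread $\ge 3$ forces a shortening. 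This pins $L$ to exactly $3m-1$ with colour classes forced to be the consecutive triples $\{0,1,2\},\{3,4,5\},\dots$, and your two-case analysis of that rigid configuration — reassigning $\sigma(k)$ when $q_k$ lies in a second translate (the arithmetic with triple indices does give spread $\ge 3$), and otherwise exploiting that $q_0,q_1,q_2\in Pg_1=P$ forces $p_1p_2\in P$ — correctly yields the contradiction. Combined with $G=\bigcup_i Pg_i\subseteq\bigcup_i\langle P\rangle g_i$, this gives $P^{3m-2}=\langle P\rangle$ of index at most $m$. One cosmetic remark: the cruder pigeonhole only gives the bound $L\le 3m-1$, i.e.\ $P^{3m-1}=\langle P\rangle$; the entire rigid-case analysis is precisely what buys the sharper exponent $3m-2$ claimed in the lemma, and you handle it correctly.
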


\begin{fact} \label{fact:known}
\begin{enumerate}
\item Suppose $H = \bigcap_{i\in I}P_i$ is a subgroup of $G^*$ type-definable over $A$, where $P_i=P_i^{-1}$ for $i\in I$. Then the index $[G^*:H]$ is bounded if and only if for each $i\in I$, $P_i$ is thick.
\item The component ${G^*}^{\infty}_A$ is the subgroup generated by the intersection of all $A$-definable thick subsets of ${G^*}$.
\end{enumerate}
\end{fact}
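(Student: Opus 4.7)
The argument splits into the two items, both relying on saturation of $G^*$ together with Ramsey-type combinatorics.

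For (1)$(\Rightarrow)$, I contrapose: if some $P_{i_0}$ is not thick then for each $n$ there exists a sequence $g_1,\ldots,g_n\in G^*$ with $g_j^{-1}g_k\notin P_{i_0}$ for $j<k$, so by compactness the partial type saying $x_\alpha^{-1}x_\beta\notin P_{i_0}$ for $\alpha<\beta<\kappa$ is consistent, and saturation realises it. Since $H\subseteq P_{i_0}$, the resulting $g_\alpha$ represent pairwise distinct cosets of $H$, so $[G^*:H]\geq\kappa$.

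For (1)$(\Leftarrow)$, suppose each $P_i$ is $n_i$-thick and, for contradiction, choose $\kappa$-many representatives $(g_\alpha)_{\alpha<\kappa}$ of distinct cosets. For every pair $\alpha<\beta$ some $i(\alpha,\beta)\in I$ witnesses $g_\alpha^{-1}g_\beta\notin P_{i(\alpha,\beta)}$. Type-definability of $H$ over the small parameter set $A$ lets us assume $|I|<\kappa$; then the Erd\H{o}s--Rado theorem applied to the $I$-coloring of pairs of $\kappa$ produces monochromatic subsets of arbitrary finite length. Taking length greater than $n_i$ for the resulting $i$ contradicts $n_i$-thickness of $P_i$.

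For (2), set $T=\bigcap\{P:P\text{ is $A$-definable and thick}\}$. Invariance of $T$ under $\mathrm{Aut}(G^*/A)$ is immediate since each $P$ is $A$-definable. The inclusion ${G^*}^\infty_A\subseteq\langle T\rangle$ reduces, by minimality, to showing $\langle T\rangle$ has bounded index. This follows by repeating the Erd\H{o}s--Rado argument from (1)$(\Leftarrow)$: a $\kappa$-sequence of representatives of distinct $\langle T\rangle$-cosets yields pairs with $g_\alpha^{-1}g_\beta\notin T$, hence excluded by some $A$-definable thick $P_{\alpha,\beta}$, and a monochromatic subsequence violates thickness of a single such $P$.

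The reverse inclusion $\langle T\rangle\subseteq{G^*}^\infty_A$ is the main obstacle. It amounts to showing $T\subseteq N$ for every $A$-invariant bounded-index subgroup $N$ of $G^*$, i.e., that for every $g\notin N$ there is an $A$-definable thick $P$ with $g\notin P$. The plan is to combine Lemma \ref{lem:thick}(4)--(5) and Lemma \ref{lem:gen} — which convert generic sets into thick ones and symmetric generic sets into approximate subgroups of bounded index — with the pigeonhole observation that $N$ itself is $([G^*:N]+1)$-thick (though not $A$-definable). A saturation/orbit argument applied to $\mathrm{tp}(g/A)$ then extracts an $A$-definable thick separator from an $A$-definable approximation to a coset neighbourhood of $N$. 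The delicate step is precisely this passage from the $A$-invariant but possibly non-type-definable $N$ to an $A$-definable thick set missing $g$, and it is carried out in detail in \cite[Section 3]{modcon}.
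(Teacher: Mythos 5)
Your proof of (1)$(\Rightarrow)$ (by contraposition, building a long sequence violating thickness via compactness) is exactly the paper's argument. For (1)$(\Leftarrow)$, you take a genuinely different route: you argue purely combinatorially, supposing $\kappa$-many coset representatives and applying Erd\H{o}s--Rado to the $I$-coloring of pairs to extract a (single-coloured) monochromatic sequence violating thickness of one $P_i$. The paper instead uses the model-theoretic machinery of \cite{pillay}: it fixes a small elementary submodel $M\supseteq A$, and for $a\equiv_M b$ produces a $c$ so that $a,c,a_2,\dots$ and $c,b,b_2,\dots$ are $A$-indiscernible, whence thickness of each $P_k$ together with indiscernibility forces $a^{-1}c,c^{-1}b\in P_k$, giving $a^{-1}b\in H^2=H$ and hence the canonical bound $[G^*:H]\le 2^{|A|+|\Lan|}$. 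Your Erd\H{o}s--Rado argument is more elementary (no appeal to the indiscernible-sequence construction) but only shows $[G^*:H]<\kappa$ directly; to recover the canonical bound you would need the standard fact that a type-definable bounded-index subgroup automatically has index $\le 2^{|A|+|\Lan|}$. Also note the small gap in phrasing: Erd\H{o}s--Rado needs $\kappa>2^{|I|}$, not merely $\kappa>|I|$, and yields a single infinite monochromatic subsequence rather than ``monochromatic subsets of arbitrary finite length'' in varying colours---the latter reading would not suffice when $I$ is infinite. Fortunately $|I|\le|A|+|\Lan|$ and $\kappa$ is sufficiently large, so the argument does go through once stated carefully.

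For (2), the paper gives no proof and simply cites \cite[Lemmas 2.2(2), 3.3]{modcon}. Your sketch of the easy inclusion (the generated group has bounded index by the same Erd\H{o}s--Rado argument, and is $A$-invariant) is correct; for the hard inclusion $\langle T\rangle\subseteq{G^*}^\infty_A$ you correctly identify that the crux is extracting an $A$-definable thick set missing a given $g\notin N$ from the merely $A$-invariant $N$, and you explicitly defer to \cite[Section 3]{modcon}, matching what the paper itself does.
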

\begin{proof} $(1)$ If $P_i$ is not thick, then by compactness, there is an infinite sequence $(g_i)_{i<\kappa}\subset G^*$ of length $\kappa$ (where $\kappa$ is the saturation of $G^*$) with $g_i^{-1}g_j\not\in P_i$. Hence $[G^*:H]\geq\kappa$ is not bounded.

Suppose now that for each $i\in I$, $P_i$ is thick. Let $M$ be an elementary submodel of $G^*$ containing $A$ and of cardinality at most $|A|+|\Lan|$ (such an $M$ exists by the downward L\"owenheim-–Skolem Theorem). It is enough to prove that if $a,b\in G^*$ have the same type over $M$, that is $a\underset{M}{\equiv} b$, then $a^{-1}b\in H$, because then $[G^*:H]\leq |S_1(M)|\leq 2^{|A|+|\Lan|}$, and $H$ would have bounded index.

Suppose that $a\underset{M}{\equiv} b$. Then by \cite[1.11(ii), 1.12]{pillay}, there exist $c\in G^*$ and sequences $(a_n)_{n\geq 2}$, $(b_n)_{n\geq 2}$ from $G^*$ such that $a,c,a_2,a_3,\ldots$ and $c,b,b_2,b_3,\ldots$ are indiscernible sequences over $A$. Fix $k\in I$. Since $P_k$ is thick, there are $i<j$ such that $a_i^{-1}a_j\in P_k$, and by indiscernibility, $a^{-1}c\in P_k$. Similarly $c^{-1}b\in P_k$. Hence $a^{-1}b\in\bigcap_{k\in I} P_k^2=H^2=H$.

$(2)$ is \cite[Lemmas 2.2(2), 3.3]{modcon}. 
\end{proof}

\section{$G^0$ in finitely generated nilpotent groups and divisibility}


It is a well known fact that for an arbitrary expansion $G = (\Z,+,\ldots)$ of the additive group of the integers $\Z$, the component ${G^*}^0$ exists and equals $\bigcap_{n\in\N}n\Z^*$. The next proposition gives a sufficient condition for a similar description of ${G^*}^0$ to hold for an arbitrary infinite group.

\begin{proposition} \label{prop:0} 
Suppose that for every natural $n$ the set of $n$-th powers $\left\{g^n : g\in G\right\}$ generates a subgroup of $G$ of finite index in finitely many steps. Then
\begin{enumerate}
\item ${G^*}^0$ exists and ${G^*}^0 = \bigcap_{n\in\N} \left\langle\left\{g^n : g\in G^*\right\}\right\rangle$,
\item every subgroup of finite index in a group elementarily equivalent to $G$ is definable in the language of groups.
\end{enumerate}
\end{proposition}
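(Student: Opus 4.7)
The plan is to use elementarity to transfer the hypothesis into $G^*$, producing $\emptyset$-definable finite index subgroups whose intersection computes the connected component. Write $P_n=\{g^n:g\in G\}$ and $P_n^*=\{g^n:g\in G^*\}$. By hypothesis, for each $n$ there are natural numbers $c_n$ and $m_n$ such that $P_n^{c_n}$ is a subgroup of $G$ of index $m_n$, and hence equals $\langle P_n\rangle$. Both ``$P_n^{2c_n}\subseteq P_n^{c_n}$'' and ``$G$ is covered by $m_n$ translates of $P_n^{c_n}$'' are first-order statements in the pure language of groups, so by elementarity $(P_n^*)^{c_n}$ is itself an $\emptyset$-definable subgroup of $G^*$ of index at most $m_n$, necessarily equal to $\langle P_n^*\rangle$.

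For part (1), I would establish that ${G^*}^0_A=\bigcap_{n\in\N}(P_n^*)^{c_n}$ for every small parameter set $A$, which simultaneously yields the existence of ${G^*}^0$ and the stated description. The inclusion $\supseteq$ is immediate, since every $(P_n^*)^{c_n}$ is an $\emptyset$-definable finite index subgroup of $G^*$. For the reverse inclusion, let $H\leq G^*$ be an arbitrary $A$-definable subgroup of finite index $k$; the normal core of $H$ (the kernel of the permutation action of $G^*$ on $G^*/H$) has index dividing $k!$, so $g^{k!}\in H$ for every $g\in G^*$. Hence $P_{k!}^*\subseteq H$, so $(P_{k!}^*)^{c_{k!}}\subseteq H$, and the whole intersection is contained in $H$.

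Part (2) will follow by the same normal-core argument: given $G'\equiv G$ and a subgroup $H'\leq G'$ of finite index $k$, elementarity makes $(\{g^{k!}:g\in G'\})^{c_{k!}}$ an $\emptyset$-definable subgroup of $G'$ of finite index contained in $H'$; hence $H'$ is a finite union of cosets of an $\emptyset$-definable subgroup, and so is definable in the language of groups with parameters. I do not foresee a genuine obstacle; the only point requiring care is that the naive description $\langle P_n\rangle=\bigcup_m P_n^m$ is not $\emptyset$-definable \emph{a priori}, and the hypothesis is precisely what lets us replace this union by the single formula $P_n^{c_n}$, making the subgroup property first-order and hence transferable to $G^*$ and to any elementarily equivalent group.
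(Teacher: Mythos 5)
Your proposal is correct and follows essentially the same route as the paper: both observe that the hypothesis makes $\langle\{g^n : g\in G\}\rangle$ the $\emptyset$-definable set $P_n^{c_n}$ (so it transfers by elementarity), and both use the normal core of a finite-index subgroup $H$ to find $n$ with $\{g^n\}\subseteq H$ — the paper takes $n$ to be the index of the core, while you take $n=k!$ and note the core's index divides $k!$, which is the same argument.
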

\begin{proof} Since $\left\langle\{g^n : g\in G^*\}\right\rangle$ is a $\emptyset$-definable subgroup of $G^*$ of finite index, it contains ${G^*}^0_A$ for every small $A$. This gives $\subseteq$ in $(1)$. Let $H$ be an arbitrary subgroup of finite index in a group $G'$ elementary equivalent to $G$. It is enough to find $n$ such that $\{g^n : g\in G'\}\subseteq H$, because then $H$ is a finite union of cosets of $\left\langle\left\{g^n : g\in G^*\right\}\right\rangle$. Let $N$ be the intersection of all conjugates of $H$ in $G'$ (note that $H$ has only finitely many conjugates in $G'$). Then, as $N$ is a normal subgroup of $G'$, for $n=[G':N]$ we have $\left\langle\{g^n : g\in G^*\}\right\rangle \subseteq N \subseteq H$.
\end{proof}

If $G$ has no subgroups of finite index, then ${G^*}^0=G^*$ where the saturation may be with respect to some further structure in addition to the group structure. Divisible groups are examples of groups satisfying the condition from Proposition \ref{prop:0}. Furthermore, we have the following remark.

\begin{remark}
If $G$ has no proper subgroups of finite index and the commutator subgroup $[G,G]$ is finite of cardinality $N$, then $G = \{g^n : g\in G\}^{3N-2}$ for each natural $n$.
\end{remark}
\begin{proof}
Consider the abelianization $G^{\rm ab} = G/[G,G]$. It has no proper subgroups of finite index, so must be divisible (because $G^{\rm ab}/pG^{\rm ab}$ for prime $p$ is a vector space over the finite field $\F_p$). Therefore, for each natural $n$, $G = \{g^n : g\in G\}\cdot [G,G]$. By Lemma \ref{lem:gen}, $G = \{g^n : g\in G\}^{3N-2}$.
\end{proof}


The condition of Proposition \ref{prop:0} is satisfied by every finitely generated virtually nilpotent group. This can be derived from a result of Martinez \cite{mar94}, which essentially uses the positive solution of the restricted Burnside problem.

\begin{proposition} \label{prop:mar} Let $G$ be a finitely generated infinite group.
\begin{enumerate}
\item[(1)] If $G$ is virtually solvable, then for each natural $n$ the group $\left\langle\{g^n : g\in G\}\right\rangle$ has finite index in $G$.
\item[(2)] If $G$ is virtually nilpotent, then $\{g^n : g\in G\}$ generates a subgroup in finitely many steps.
\end{enumerate}
\end{proposition}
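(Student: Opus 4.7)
The set $S_n := \{g^n : g\in G\}$ is invariant under conjugation, since $hg^nh^{-1}=(hgh^{-1})^n$, so $G^{[n]}:=\langle S_n\rangle$ is a normal subgroup of $G$, and the quotient $G/G^{[n]}$ is a finitely generated, virtually solvable group of exponent dividing $n$. Part (1) therefore reduces to the classical fact that every finitely generated virtually solvable group of bounded exponent is finite. Passing to a solvable subgroup of finite index (still finitely generated by Schreier's theorem) reduces this further to the finitely generated solvable case, where one inducts on the derived length: the base case is a finitely generated abelian group of exponent $n$, which is finite; for the inductive step, $H/H'$ is finitely generated abelian of exponent $n$, hence finite, so $H'$ has finite index in $H$, is finitely generated, and has smaller derived length, so by induction is finite, whence $H$ is finite.

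\textbf{Plan for (2).} I would first reduce to the case that $G$ itself is nilpotent: take a nilpotent subgroup of finite index and replace it by the intersection of its finitely many conjugates to make it normal; call the result $N$, which is finitely generated nilpotent by Schreier's theorem. Applying Martinez's theorem \cite{mar94}, whose proof draws on Zelmanov's positive solution of the restricted Burnside problem, to $N$ yields some $k_0\in\N$ such that $Q := \{h^n : h\in N\}^{k_0}$ is already a subgroup of $N$; by part (1) applied to $N$, $Q$ has finite index in $N$, hence in $G$, hence in $G^{[n]}$.

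Finally, I would push this back to $G$ by a coset argument. Set $P := S_n = \{g^n : g\in G\}$; then $Q\subseteq P^{k_0}$ (every $n$-th power in $N$ is one in $G$), and $Q$ has finite index in $G^{[n]}$. The image $\bar P = PQ/Q$ in the \emph{finite} group $G^{[n]}/Q$ is a symmetric generating subset containing the identity, so $\bar P^K = G^{[n]}/Q$ for some $K$. Lifting back gives $P^KQ = G^{[n]}$, and combining with $Q\subseteq P^{k_0}$ yields $P^{K+k_0}=G^{[n]}$, which is therefore a subgroup of $G$.

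\textbf{Main obstacle.} Part (1) is elementary once one recalls the finiteness of finitely generated solvable Burnside groups. The substantive content of (2) lies in Martinez's theorem, which rests on the deep positive solution of the restricted Burnside problem; the reduction from virtually nilpotent to nilpotent and the coset argument transferring the result from $N$ back to $G$ are technical but routine.
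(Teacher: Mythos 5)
Your proposal is correct and follows essentially the same route as the paper: part (1) reduces, by passing to the quotient and to a finite-index solvable subgroup, to the finiteness of finitely generated solvable groups of bounded exponent, proved by induction on derived length; part (2) applies Martinez's theorem (resting on the restricted Burnside problem) to a nilpotent finite-index subgroup and transfers the bound back to $G$ by a coset argument combined with part (1). The only inessential difference is that you normalize the nilpotent subgroup before applying Martinez, whereas the paper works directly with the (not necessarily normal) finite-index nilpotent subgroup and counts coset representatives, which slightly shortens the transfer step.
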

\begin{proof}
$(1)$ This fact is well known \cite[Lemma 1]{mal}. We may assume that $G$ is solvable. By consideration of the quotient \[G/\left\langle\{g^n : g\in G\}\right\rangle,\] it is enough to prove that every finitely generated solvable group of finite exponent is finite. Let $G$ be such a group. The group $G/[G,G]$ is abelian, finitely generated and of finite exponent, so $G/[G,G]$ is finite. The group $[G,G]$ is finitely generated (see \cite[14.3.2]{km}). The degree of solvability of $[G,G]$ is less than that of $G$, hence by the induction hypothesis, $[G,G]$ is finite, so $G$ is finite too. 

$(2)$ Let $H<G$ be a nilpotent subgroup of finite index. By \cite[Lemmas 1,2 and 3]{mar94}, which uses the restricted Burnside problem, the set $\{h^n : h\in H\}$ generates a group in finitely many steps, say in $N$ steps. By $(1)$ the group $\left\langle\{g^n : g\in H\}\right\rangle$ has finite index in $\left\langle\{g^n : g\in G\}\right\rangle$. Take $g_1,\dots,g_d \in \left\langle\{g^n : g\in G\}\right\rangle$ as representatives of the cosets of $\left\langle\{h^n : h\in H\}\right\rangle$. If each $g_i$ is a product of $M$ $n$-th powers, then each element of $\left\langle\{g^n : g\in G\}\right\rangle$ is a product of $N+M$ $n$-th powers.
\end{proof}

Using \ref{prop:0}$(2)$ and \ref{prop:mar} we obtain the following corollary.

\begin{corollary} \label{cor:poi}
Every subgroup of finite index in a group elementarily equivalent to a finitely generated virtually nilpotent group is definable in the language of groups.
\end{corollary}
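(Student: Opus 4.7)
The plan is essentially to combine the two previous results, since the conclusion is stated as a corollary. Let $G$ be a finitely generated infinite virtually nilpotent group and fix an arbitrary natural number $n$. I would first apply Proposition \ref{prop:mar}(2) to conclude that the set of $n$-th powers $\{g^n : g\in G\}$ generates a subgroup in finitely many steps, that is, there exists some $k\in\N$ such that $\{g^n : g\in G\}^{k}$ is already a subgroup of $G$. Next, since every virtually nilpotent group is virtually solvable, Proposition \ref{prop:mar}(1) gives that $\langle\{g^n : g\in G\}\rangle$ has finite index in $G$.

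Together, these two observations verify precisely the hypothesis of Proposition \ref{prop:0}: for every natural $n$ the set of $n$-th powers of $G$ generates a finite-index subgroup in finitely many steps. Consequently, Proposition \ref{prop:0}(2) applies, yielding that every finite-index subgroup of any group $G'\equiv G$ is definable in the pure language of groups, which is exactly the statement of the corollary.

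The proof is essentially a bookkeeping step, with the only thing to keep in mind being that the finite-index case (virtually nilpotent) reduces to the nilpotent case: a nilpotent subgroup $H<G$ of finite index handles the "finitely many steps" assertion internally, and then one lifts to all of $G$ using finitely many coset representatives of $\langle\{h^n:h\in H\}\rangle$ inside $\langle\{g^n:g\in G\}\rangle$, exactly as already done in the proof of Proposition \ref{prop:mar}(2). There is no genuine obstacle here; the work has been done in the two propositions, and the corollary is obtained by chaining them.
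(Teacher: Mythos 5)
Your proof matches the paper's own argument exactly: the authors likewise obtain the corollary by checking that Proposition \ref{prop:mar} (parts (1) and (2) together) supplies the hypothesis of Proposition \ref{prop:0}, and then invoking Proposition \ref{prop:0}(2). Nothing is missing and nothing is done differently.
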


Note that such a finite index subgroup might not be $\emptyset$-definable. For example $\Z\times 2\Z$ is not $\emptyset$-definable in $(\Z^2,+)$, as $\Z\times 2\Z$ and $2\Z\times \Z$ are isomorphic through an automorphism of $\Z^2$.

As a corollary of Propositions \ref{prop:0} and \ref{prop:mar} we have that ${G^*}^0 = \bigcap_{n\in\N} \left\langle\left\{g^n : g\in G^*\right\}\right\rangle$ for a finitely generated virtually nilpotent group $G$. In Theorem \ref{thm:divisible}, using a result of Mal'cev, we shall prove that in this case ${G^*}^0 = \bigcap_{n\in\N} \left\{g^n : g\in G^*\right\}$.

The component ${\Z^*}^0$ is clearly a divisible group. We would like to know whether the components of nilpotent groups are also divisible. The following remark gives a necessary condition for this to happen.

\begin{remark} \label{rem:gen}
Assume that for some $n\in\N_{\geq 2}$ and some small parameter set $A$, at least one of the components ${G^*}^0_A$, ${G^*}^{00}_A$ or ${G^*}^{\infty}_A$ is $n$-divisible. Then the set of $n$-th powers $\{g^n : g\in G\}$ is thick.
\end{remark}
\begin{proof}
Suppose for example that ${G^*}^{\infty}_A$ is $n$-divisible. Then ${G^*}^{\infty}_A\subseteq \{g^n : g\in G^*\}$. Note that the set $\{g^n : g\in G^*\}$ is $\emptyset$-definable in $G$. By Fact \ref{fact:known}$(2)$ \[\bigcap \left\{P : P\subseteq G^* \text{ is $A$-definable and thick}\right\}\subseteq \{g^n : g\in G^*\},\] so by a compactness argument $P\subseteq \{g^n : g\in G^*\}$, for some $A$-definable thick $P$. Hence $\{g^n : g\in G^*\}$ is thick, so also $\{g^n : g\in G\}$ is thick.
\end{proof}

\begin{remark} \label{rem:powers}
Suppose that for every $n\in\N$ the set $\{g^n : g\in G\}$ is thick. Then ${G^*}^0$ exists and ${G^*}^0 = \bigcap_{n\in\N} \left\langle\left\{g^n : g\in G^*\right\}\right\rangle$.
\end{remark}
\begin{proof}
The conclusion follows by Proposition \ref{prop:0} and Lemma \ref{lem:gen}.
\end{proof}

In addition to the necessary conditions given by Remark \ref{rem:gen}, we can also provide some sufficient conditions for divisibility of connected components, at least in the nilpotent case. We will need the following fundamental result of Mal'cev.

The \emph{lower central series} of a group $G$ is defined thus: $G_1 = G$, $G_{n+1}=[G,G_n]$. If $G_{m+1}$ is trivial we say that $G$ is \emph{nilpotent of class $m$}.

\begin{fact}{\cite[Lemma 2]{mal}} \label{fact:mal}
If $G$ is a nilpotent group of class $m\geq 1$, then for any $n\in\N$ the group $\left\langle\left\{g^{n^m} : g\in G\right\}\right\rangle$ is $n$-divisible.
\end{fact}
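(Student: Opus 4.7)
The plan is to prove the statement by induction on the nilpotency class $m$, using the Hall--Petresco collection formula as the principal tool. Recall that this formula asserts that in any group $G$ and for $x, y \in G$, $n \in \N$,
\[
(xy)^n \;=\; x^n y^n \cdot c_2(x,y)^{\binom{n}{2}} \cdot c_3(x,y)^{\binom{n}{3}} \cdots,
\]
where $c_i(x,y) \in G_i$. In a nilpotent group of class $m$ the product terminates at $c_m^{\binom{n}{m}}$.

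For the base case $m = 1$, the group $G$ is abelian, so $\left\langle\{g^n : g \in G\}\right\rangle$ already coincides with the set $\{g^n : g \in G\}$, and every element manifestly has an $n$-th root in $G$.

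For the inductive step, let $G$ be nilpotent of class $m \geq 2$. Since $[G, G_m] = G_{m+1} = 1$, the final term $G_m$ lies in $Z(G)$, and $\bar G = G/G_m$ is nilpotent of class $m - 1$. For any $h \in \left\langle\{g^{n^m} : g \in G\}\right\rangle$, the identity $g^{n^m} = (g^n)^{n^{m-1}}$ shows that $\bar h$ lies in $\left\langle\{\bar y^{n^{m-1}} : \bar y \in \bar G\}\right\rangle$; by the induction hypothesis, $\bar h = \bar w^n$ for some $w \in G$, giving a decomposition
\[
h \;=\; w^n \cdot z \qquad \text{with } z \in G_m \subseteq Z(G).
\]
The strategy for closing the induction is to locate $t \in G_m$ with $t^n = z$, whereupon $(wt)^n = w^n t^n = h$ by centrality of $t$.

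The hard part is precisely this residual divisibility $z \in n G_m$. Unwinding $h = g_1^{n^m} \cdots g_k^{n^m}$ via Hall--Petresco applied with exponent $n^m$ and collecting the correction terms by their depth in the lower central series, the weight-$i$ contributions for $i < m$ are absorbed by the inductive choice of $w$, while the weight-$m$ contributions pile up in $G_m$ as a product of central elements raised to powers $\binom{n^m}{i}$ for $2 \leq i \leq m$. The arithmetic input that saves the day is that $n$ divides $\binom{n^m}{i}$ for each $1 \leq i \leq m$; this is Kummer's theorem, since adding $i$ and $n^m - i$ in base $p$ (for any prime $p \mid n$) necessarily produces at least one carry into the $m$-th position. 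Because $G_m$ is abelian and central, we may then genuinely take an $n$-th root $t \in G_m$ of $z$, closing the induction.
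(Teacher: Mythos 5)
The paper quotes this as a known result of Mal'cev and gives no proof, so your argument must stand on its own. The overall scheme — induction on the class $m$, pass to $G/G_m$, control the residual in $G_m$ via Hall--Petresco and divisibility of binomial coefficients — is the right strategy, and the arithmetic input $n\mid\binom{n^m}{i}$ for $1\le i\le m$ is indeed the decisive fact. But the step you call ``the hard part'' is not actually carried out. You want the residual $z=w^{-n}h\in G_m$ to be an $n$-th power, and you claim that the weight-$i$ contributions for $i<m$ are ``absorbed by the inductive choice of $w$'' while the weight-$m$ contributions ``pile up as a product of central elements raised to powers $\binom{n^m}{i}$.'' Neither assertion follows from what you have proved. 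The inductive hypothesis only guarantees the \emph{existence} of some $w$ with $\bar w^n=\bar h$; it says nothing about the form of $w$, so $z=w^{-n}h$ is an uncontrolled element of $G_m$, and there is no reason it should match any particular piece of a Hall--Petresco expansion. (Two admissible choices $w,w'$ give residuals differing by $w'^{-n}w^n$, which can be an essentially arbitrary element of $G_m$.) To make this work one must take a \emph{specific} $w$ built from the collection process (roughly the ``$n^{m-1}$-th power part'' of $h$) and compute $z$ explicitly; one then finds that the weight-$m$ correction terms are iterated commutators of elements each already carrying an exponent $\ge n^{m-1}$, so the total exponents are not $\binom{n^m}{i}$ as you write but more complicated expressions which are nevertheless divisible by $n$. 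That computation is the substance of the lemma and is missing here.

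A secondary, fixable point: as phrased, your Kummer argument supplies ``at least one carry,'' which only gives $p\mid\binom{n^m}{i}$. If $p^a\,\|\,n$ with $a>1$ you need at least $a$ carries. This is true — the lowest nonzero base-$p$ digit of $i$ sits at position $v_p(i)\le m-1 < ma$, and since the bottom $ma$ base-$p$ digits of $n^m$ all vanish, the carry propagates through positions $v_p(i),\dots,ma-1$, giving at least $ma-(m-1)\ge a$ carries — but it needs to be stated, since ``one carry'' is not enough when $n$ is not squarefree.
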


\begin{theorem} \label{thm:divisible}
Suppose that $G$ is an infinite finitely generated virtually nilpotent group. Then ${G^*}^0$ is divisible and ${G^*}^0 = \bigcap_{n\in\N} \left\{g^n : g\in G^*\right\}$.
\end{theorem}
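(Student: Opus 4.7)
By Propositions \ref{prop:0} and \ref{prop:mar}, ${G^*}^0 = \bigcap_{n\in\N}\langle\{g^n : g \in G^*\}\rangle$. Since $\bigcap_n \{g^n : g \in G^*\} \subseteq \bigcap_n \langle\{g^n : g \in G^*\}\rangle = {G^*}^0$ is immediate, once ${G^*}^0$ is shown to be divisible one gets the reverse inclusion for free: every $x \in {G^*}^0$ is an $n$-th power of some $y \in {G^*}^0 \subseteq G^*$, for every $n$. So the task reduces to proving divisibility.

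My plan is to exhibit ${G^*}^0$ as the intersection of a descending family of $\emptyset$-definable finite-index subgroups $\{D_n\}_{n \in \N}$ with $D_n$ being $n$-divisible, and then conclude via saturation. Pick a normal nilpotent subgroup $N \leq G$ of finite index $k_0$ and class $m$, obtained by intersecting the (finitely many) conjugates of any nilpotent finite-index subgroup. Since $g^{k_0} \in N^*$ for every $g \in G^*$, the subgroup $H := \langle\{g^{k_0} : g \in G^*\}\rangle$ (which is $\emptyset$-definable by Proposition \ref{prop:mar}(2)) lies in $N^*$ and is therefore nilpotent of class at most $m$. The group $H_0 := \langle\{g^{k_0} : g \in G\}\rangle$ is finitely generated and nilpotent, so Proposition \ref{prop:mar}(2) applied to $H_0$ says that for each $n$ the set $\{h^{n^m} : h \in H_0\}$ generates its subgroup in finitely many steps. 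Transferring to $G^*$ by elementary equivalence, this shows that $D_n := \langle\{h^{n^m} : h \in H\}\rangle$ is $\emptyset$-definable. Mal'cev's Fact \ref{fact:mal} applied to $H$ gives that $D_n$ is $n$-divisible, and $D_n$ has finite index in $H$ (hence in $G^*$) by Proposition \ref{prop:mar}(1).

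It remains to identify ${G^*}^0$ with $\bigcap_n D_n$ and run the saturation argument. The inclusion ${G^*}^0 \subseteq \bigcap_n D_n$ is immediate from $\emptyset$-definability and finite index of each $D_n$. Conversely, from $g^{k^m} = (g^{k^{m-1}})^k$ we get $D_k \subseteq \langle\{g^{k^m} : g \in G^*\}\rangle \subseteq \langle\{g^k : g \in G^*\}\rangle$, giving $\bigcap_n D_n \subseteq \bigcap_k \langle\{g^k : g \in G^*\}\rangle = {G^*}^0$. For divisibility, given $x \in {G^*}^0$ and $n \in \N$, consider the partial type $\pi(y) := \{y \in D_k : k \in \N\} \cup \{y^n = x\}$. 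Given $k_1, \ldots, k_r$, set $a := n \cdot \mathrm{lcm}(k_1, \ldots, k_r)$; since each $k_i$ divides $a$, we have $D_a \subseteq D_{k_i}$ (as any $a^m$-th power is a $k_i^m$-th power), and $D_a$ is $n$-divisible (since $n \mid a$), so there is $y \in D_a \subseteq \bigcap_i D_{k_i}$ with $y^n = x$. By saturation $\pi$ is realized by some $y \in \bigcap_k D_k = {G^*}^0$.

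The main obstacle is ensuring that the $D_n$ are $\emptyset$-definable in $G^*$. Mal'cev's lemma naturally produces an $n$-divisible generated subgroup inside the nilpotent $H$, but to make this a $\emptyset$-definable subgroup of $G^*$ one needs the generation to happen in finitely many steps within $H$; this is secured precisely by applying Proposition \ref{prop:mar}(2) to the finitely generated nilpotent group $H_0$ and transferring via elementary equivalence, which is the one place where the virtually nilpotent hypothesis enters in an essential (first-order) way.
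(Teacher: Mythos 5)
Your proof is correct and follows essentially the same strategy as the paper: reduce to a $\emptyset$-definable finite-index nilpotent subgroup (definable via Proposition \ref{prop:mar}), apply Mal'cev's Fact \ref{fact:mal} together with Proposition \ref{prop:mar}$(2)$ to manufacture $\emptyset$-definable, $n$-divisible, finite-index subgroups, and conclude divisibility of ${G^*}^0$ via a saturation argument on the type $\{y^n = x\}\cup\{y\in D_k : k\in\N\}$. Your writeup is somewhat more explicit (you introduce the auxiliary family $D_n=\langle\{h^{n^m}:h\in H\}\rangle$ and spell out the finite-satisfiability bookkeeping, where the paper works directly with $P_{(kn)^m}$), but the underlying ideas and the role played by each hypothesis are the same.
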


\begin{proof} Let $H<G$ be a nilpotent subgroup of finite index. By Propositions \ref{prop:0} and \ref{prop:mar}, we may assume that for some $n\in\N$ we have $H=\left\langle \left\{g^{n}:g\in G\right\}\right\rangle$, so $H$ is $\emptyset$-definable. Therefore we may assume that $G$ is a nilpotent group of class $m\geq 1$.

Let $r\in{G^*}^0$, $n\in\N$ and define $P_k = \left\langle\left\{g^{k} : g\in G\right\}\right\rangle$, for $k\in\N$ (the interpretation of $P_k$ in $G^*$ we also denote by $P_k$). Consider the following type \[p(x)=\left\{r=x^n,\  x\in {G^*}^0=\bigcap_{k\in\N}P_k \right\}.\] By Fact \ref{fact:mal} we have $\{g^n : g\in P_k\} \supseteq \{g^n : g\in P_{(kn)^m}\} = P_{(kn)^m}$. Hence $p$ is consistent, so ${G^*}^0$ is $n$-divisible. Thus, ${G^*}^0 \subseteq \bigcap_{n\in\N} \left\{g^n : g\in G^*\right\} \subseteq \bigcap_{n\in\N} P_n = {G^*}^0$.
\end{proof}

\begin{proposition}\label{prop:divisible}
Suppose $G$ is an infinite abelian group and fix a parameter set $A\subset G^*$. Then ${G^*}^{00}_A$ is divisible if and only if for every $n\in\N$, $[G:nG]$ is finite.
\end{proposition}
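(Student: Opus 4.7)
The plan is to treat each direction separately, with almost all the work in the backward direction.

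For $(\Rightarrow)$, suppose ${G^*}^{00}_A$ is $n$-divisible. Then ${G^*}^{00}_A\subseteq nG^*$, so the $\emptyset$-definable subgroup $nG^*$ has index at most $[G^*:{G^*}^{00}_A]$, which is bounded. By saturation a definable subgroup of $G^*$ has either finite or unbounded index, so $[G:nG]=[G^*:nG^*]$ is finite.

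For $(\Leftarrow)$, assume every $[G:nG]$ is finite and set $H={G^*}^{00}_A$. By Fact \ref{fact:known}(1) together with Lemma \ref{lem:thick}(1), I can write $H=\bigcap_{i\in I}P_i$ where $\{P_i\}$ is a family of $A$-definable symmetric thick subsets of $G^*$ closed under finite intersection. I will show that for each $n$, $nH=H$. The central step is the identity $nH=\bigcap_{i\in I}nP_i$, which is type-definable over $A$ since each $nP_i$ is $A$-definable. The nontrivial inclusion $\bigcap_i nP_i\subseteq nH$ follows by a saturation argument: given $z\in\bigcap_i nP_i$, the partial type $q(w)=\{nw=z\}\cup\{w\in P_i:i\in I\}$ is finitely satisfiable, because any finite conjunction collapses (via closure under intersection) to a single requirement $w\in P_j$ with $nw=z$, and such $w$ exists because $z\in nP_j$. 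A realization of $q$ lies in $\bigcap_i P_i=H$ and is an $n$-th root of $z$.

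It remains to bound $[G^*:nH]$; once this is bounded, $nH$ is a type-definable subgroup over $A$ of bounded index in $G^*$, and the minimality of $H={G^*}^{00}_A$ forces $H\subseteq nH$, so combined with $nH\subseteq H$ we obtain $nH=H$. Since $[G^*:nH]=[G^*:H]\cdot[H:nH]$ and the first factor is bounded, it suffices to bound $[H:nH]$. Applying the snake lemma to $0\to H\to G^*\to K\to 0$ (with $K=G^*/H$) and the multiplication-by-$n$ map on each term yields the exact sequence
\[K[n]\to H/nH\to G^*/nG^*\to K/nK\to 0.\]
The hypothesis gives $|G^*/nG^*|=[G:nG]$ finite, and $|K[n]|\leq|K|=[G^*:H]$ is bounded, so $|H/nH|$ is bounded as required.

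The subtle step is the identity $nH=\bigcap_i nP_i$: in general the image of a type-definable set under a definable map is not type-definable, and the equality here genuinely uses the abelian structure (so that $nH$ is itself a subgroup), the closure of the defining family under finite intersection, and saturation. The snake-lemma bound then pinpoints exactly why the finiteness of $[G:nG]$ is the right hypothesis: it is precisely what controls the cokernel term $G^*/nG^*$ that would otherwise prevent $H/nH$ from being small.
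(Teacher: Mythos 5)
Your proof is correct, and it departs from the paper's argument in a substantive way. The shared core is the identity $nH=\bigcap_i nP_i$, which both proofs establish by compactness using the closure of the defining family under finite intersection; after that the two arguments diverge. The paper shows directly that each $nP_i$ is thick in $G^*$: by Lemma \ref{lem:thick}(3) the image $nP_i$ is thick in the subgroup $nG^*$, and since $[G^*:nG^*]$ is finite this lifts to thickness in $G^*$, whence $nH$ has bounded index by Fact \ref{fact:known}(1). You instead keep the thick-set machinery in the background and use a purely algebraic device, the snake lemma applied to multiplication by $n$ on $0\to H\to G^*\to G^*/H\to 0$, to bound $|H/nH|$ by $|K[n]|\cdot|G^*/nG^*|$, and then multiply by the bounded index $[G^*:H]$. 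This is slightly longer but arguably cleaner in its bookkeeping, and it makes the role of the hypothesis $[G:nG]$ finite transparent as exactly the thing that controls the cokernel term. For the forward direction your argument is also more direct than the paper's: you observe that $n$-divisibility forces ${G^*}^{00}_A\subseteq nG^*$, so the $\emptyset$-definable subgroup $nG^*$ has bounded index, hence (by saturation) finite index; the paper instead extracts finiteness from the thickness of $nP_0$ inside $nG^*$ via a genericity argument. Both proofs are sound; yours replaces the thick-set genericity reasoning with an elementary homological computation and a standard saturation dichotomy for definable subgroups.
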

\begin{proof} By Fact \ref{fact:known}$(1)$, ${G^*}^{00}_A = \bigcap_{i\in I}P_i$, for some collection $\{P_i : i\in I\}$ of $A$-definable thick sets. Fix $n\in\N$.

$(\Leftarrow)$  Then by compactness $n\cdot {G^*}^{00}_A = \bigcap_{i\in I}n\cdot P_i$. Since $n\cdot P_i$ is thick in $n\cdot G^*$ and the latter has finite index in $G^*$, the set $n\cdot P_i$ is $A$-definable and thick in $G^*$. Therefore, $n\cdot {G^*}^{00}_A$ has bounded index in $G^*$, so $n\cdot {G^*}^{00}_A = {G^*}^{00}_A$.

$(\Rightarrow)$ As ${G^*}^{00}_A = n\cdot {G^*}^{00}_A$, we have by compactness that $n\cdot P_0$ is thick in $G^*$. We know that $n\cdot P_0$ is thick in $n\cdot {G^*}^{00}_A$, so $[G:nG]$ must be finite.
\end{proof}

It is a corollary from the above that if $G$ is finitely generated, then  ${G^*}^{00}_A$ is divisible. In this restricted case one can also prove that ${G^*}^{\infty}_A$ is divisible. 



When $G$ is finitely generated these proofs follow from the following consequence of the classification of finitely generated abelian groups: 
\begin{quote}
Let $G$ be abelian and finitely generated. If $P\subseteq G$ is thick, then for all $n\in\N$ the set $\{g^n : g\in P\}$ is also thick.
\end{quote}
We do not know whether a similar fact can be proved for nilpotent groups.

\begin{problem}
Let $G$ be a finitely generated virtually nilpotent group. Are the components ${G^*}^{00}_A$ and ${G^*}^{\infty}_A$ divisible?
\end{problem}


In general ${G^*}^0$ need not be divisible. In particular, the previous proposition cannot be generalized to virtually solvable groups. To prove this, we use a result from \cite{hrush}, which is a partial converse to Mal'cev's theorem (Fact \ref{fact:mal}). 

\begin{theorem}{\cite[Theorems A and B]{hrush}} \label{thm:hrush}
Suppose that $G$ is a finitely generated group.
\begin{enumerate}
\item If $G$ is virtually solvable, and for some finite $J\subset\N_{\geq 2}$, the set $\{g^n : g\in G, n\in J\}$ contains a finite index subgroup, then $G$ is virtually nilpotent.
\item Assume that $G$ is a linear group, i.e. $G$ is a subgroup of $\GL_n(R)$ for some finitely generated domain $R$. If, for some finite $J\subset\N_{\geq 2}$, the set $\{g^n : g\in G, n\in J\}$ is generic, $G$ is virtually solvable.
\end{enumerate}
\end{theorem}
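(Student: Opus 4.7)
Both statements are deep results from \cite{hrush}, and any plan must rely on the Tits alternative together with structure theory for virtually solvable groups. The guiding principle is that the presence of many $n$-th powers for some $n \ge 2$ is incompatible with the existence of a non-abelian free subgroup: this should reduce the linear case (2) to virtual solvability, after which part (1) sharpens virtual solvability to virtual nilpotence.

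For part (2), I would argue the contrapositive. Assume $G$ is linear but not virtually solvable; then Tits' alternative furnishes a non-abelian free subgroup $F = \langle a, b\rangle \le G$. The claim is that $P_J := \{g^n : g \in G,\ n \in J\}$ cannot be right generic. First, by intersecting with $F$ and a coset-reduction, one should show that right $m$-genericity of $P_J$ in $G$ transfers to positive lower density of $P_J \cap F$ in the word-balls of $F$. Then a counting estimate should close the argument: for each fixed $n \ge 2$, the number of reduced words of length at most $\ell$ in $F$ that are honest $n$-th powers is bounded by $O(c^{\ell/n})$, where $c > 1$ is the exponential growth rate of $F$, while the ball of radius $\ell$ has size of order $c^\ell$. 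Summing over the finite set $J$ forces density zero, contradicting genericity.

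For part (1), the plan is induction on the derived length of a solvable finite-index subgroup $N \le G$. In the abelian base case, a finitely generated abelian $N$ containing a finite-index subset of $P_J$ forces $N/nN$ to be finite for some $n \in J$, so $G$ is virtually $\Z^k$, hence virtually nilpotent. For the inductive step I would pass to the abelianization $N/[N,N]$, transfer the power hypothesis through Mal'cev-style commutator identities (compare Fact \ref{fact:mal}), and apply the induction hypothesis to $[N,N]$, which is finitely generated by \cite[14.3.2]{km} and has strictly smaller derived length. The hardest step, and where Hrushovski's machinery seems indispensable, is precisely this inductive step: the identity $(gh)^n \equiv g^n h^n \pmod{[G,G]}$ carries an error term that iterates awkwardly across the derived series, and combining information from different exponents $n \in J$ is not purely formal. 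One expects genuinely model-theoretic input (stabilizer theorems and type-definable approximate subgroups in the sense of \cite{hrush}) to be required here; replacing this by an elementary argument would be the real crux.
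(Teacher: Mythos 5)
This statement is not proved in the paper: it is quoted as an external result, namely Theorems A and B of Hrushovski, Kropholler, Lubotzky and Shalev \cite{hrush}, and the authors use it as a black box. So there is no in-paper argument to compare your sketch against, and attempting to reprove it from scratch goes well beyond what the paper requires of the reader.

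On the merits of the sketch itself, the Tits-alternative reduction for part (2) is a natural first idea, but there is a genuine gap in the counting step. After passing to a nonabelian free subgroup $F \le G$, the set you need to show is small is $P_J \cap F$, i.e.\ the set of elements of $F$ that are $n$-th powers \emph{in $G$} for some $n \in J$. These need not be $n$-th powers \emph{in $F$}: an element of $F$ can be $g^n$ for some $g \in G \setminus F$. Your word-length estimate only counts $n$-th powers of elements of $F$, so it does not a priori control $|P_J \cap F|$ at all; one would need some control over roots taken in $G$ of elements of $F$ (a malnormality- or root-closure-type statement), and this is precisely where the real work in \cite{hrush} lies. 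Incidentally, the bound $O(c^{\ell/n})$ is also not quite right — writing $g = uvu^{-1}$ with $v$ cyclically reduced gives $|g^n| = 2|u| + n|v|$, so the count of $n$-th powers of $F$-elements in the $\ell$-ball is of order $\mathrm{poly}(\ell)\cdot c^{\ell/2}$ rather than $c^{\ell/n}$; this is still exponentially smaller than $|B_\ell|$, so the density conclusion survives, but the exponent is wrong. Your assessment that the inductive step in part (1) is the real crux, and that elementary Mal'cev-style commutator manipulations do not obviously close the induction, is accurate; \cite{hrush} is a substantial paper precisely because this step is not a formal reduction.
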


The following corollary follows now by Remark \ref{rem:gen} and Lemma \ref{lem:thick}$(4)$.

\begin{corollary} \label{cor:cong}
Suppose that $G$ is a finitely generated infinite group.
\begin{enumerate}
\item If $G$ is a linear group and for some $n\in\N_{\geq 2}$, some of the components ${G^*}^0_A$, ${G^*}^{00}_A$ or ${G^*}^{\infty}_A$ are $n$-divisible, then $G$ is virtually solvable.
\item If $G$ is a virtually solvable group and ${G^*}^0_A$ is $n$-divisible for some $n\in\N_{\geq 2}$, then $G$ is virtually nilpotent, so it is a finite extension of a subgroup of $\UT_m(\Z)$, for some $m\in\N$.
\item If $G$ is an infinite finitely generated virtually solvable group, but not virtually nilpotent, then ${G^*}^0_A$ is not divisible. In fact ${G^*}^0_A$ is not $n$-divisible, for any $n\in\N_{\geq 2}$.
\end{enumerate}
\end{corollary}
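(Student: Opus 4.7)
The plan is to prove (1) and (2) directly and to obtain (3) as the contrapositive of (2). The engine common to both (1) and (2) is to transfer the $n$-divisibility hypothesis on some component into a combinatorial property of the set of $n$-th powers in $G$, and then invoke the appropriate part of Theorem \ref{thm:hrush}. Concretely, if any one of ${G^*}^0_A$, ${G^*}^{00}_A$ or ${G^*}^{\infty}_A$ is $n$-divisible for some $n\geq 2$, Remark \ref{rem:gen} yields that the set $P:=\{g^n:g\in G\}$ is thick in $G$, and Lemma \ref{lem:thick}(4) then gives that $P$ is also (left and right) generic in $G$.

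For part (1), $G$ is linear and genericness of $P$ (with $J=\{n\}$) is exactly the hypothesis of Theorem \ref{thm:hrush}(2), so we conclude that $G$ is virtually solvable.

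For part (2), $G$ is already virtually solvable, and $P$ is symmetric with $1\in P$. Applying Lemma \ref{lem:gen} to the generic set $P$ produces a finite-index subgroup of $G$ of the form $P^{3m-2}$, where $m$ is the genericness parameter; this is what is needed to invoke Theorem \ref{thm:hrush}(1), from which $G$ is virtually nilpotent. The concrete description as a finite extension of a subgroup of $\UT_m(\Z)$ is then immediate from the structural results of Kargapolov and Merzljakov recalled in the introduction (the torsion-free nilpotent finite-index subgroup is given by 17.2.2, and its embedding into some $\UT_m(\Z)$ by 17.2.5). Finally, (3) is the contrapositive of (2): divisibility of ${G^*}^0_A$ implies $n$-divisibility for every $n\geq 2$, so if $G$ were virtually solvable but not virtually nilpotent, (2) would force ${G^*}^0_A$ to fail $n$-divisibility for every $n\geq 2$, in particular to be non-divisible.

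The only delicate point is in part (2), where one must match the intermediate combinatorial output—thickness of $P$, and through Lemma \ref{lem:gen} the fact that $P^{3m-2}$ is a finite-index subgroup of $G$ sitting inside a bounded-length product of $n$-th powers—with the precise hypothesis of Theorem \ref{thm:hrush}(1), which asks that the set $\{g^n : g \in G,\ n\in J\}$ contain a finite-index subgroup of $G$ for some finite $J$. I expect this passage, essentially a bookkeeping exercise with $J=\{n\}$ and the Ramsey-style bound from Lemma \ref{lem:gen}, to be the one step requiring a small amount of care; everything else is a direct citation.
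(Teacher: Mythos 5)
Parts (1) and (3) are fine, and part (1) matches the paper's intended route (Remark \ref{rem:gen} gives that $\{g^n:g\in G\}$ is thick, Lemma \ref{lem:thick}(4) upgrades to generic, and Theorem \ref{thm:hrush}(2) applies). Part (3) as the contrapositive of (2) is also correct.

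The argument for part (2), however, has a genuine gap, and you have flagged the exact spot but misdiagnosed it as mere bookkeeping. Theorem \ref{thm:hrush}(1) requires a finite index subgroup \emph{contained in} $P=\{g^n:g\in G\}$. Lemma \ref{lem:gen} applied to the generic symmetric set $P$ gives that $P^{3m-2}$ is a finite index subgroup, but $P^{3m-2}\supseteq P$: this is a finite index subgroup \emph{containing} $P$, i.e.\ the inclusion goes the wrong way. Being a product of $3m-2$ elements of $P$ does not place an element of $P^{3m-2}$ inside $\{g^n:g\in G,\ n\in J\}$ for any finite $J$ (a product of $n$-th powers need not be a $k$-th power for any fixed $k$ in a non-nilpotent group — which is exactly what we are trying to establish), so the ``delicate matching'' you anticipate cannot be made. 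In other words, thickness/genericity of $P$ is not enough for part (2).

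The correct route for (2) uses the specific shape of ${G^*}^0_A$: it is a directed intersection of $A$-definable finite-index subgroups. If ${G^*}^0_A$ is $n$-divisible then ${G^*}^0_A\subseteq\{g^n:g\in G^*\}$, and since the latter is definable, compactness yields a single $A$-definable finite-index subgroup $H$ with $H\subseteq\{g^n:g\in G^*\}$. The assertion ``some instance of the formula defining $H$ carves out a subgroup of index at most $k$ contained in $\{g^n\}$'' is first-order, so it transfers by elementarity to $G$ itself; this gives precisely the hypothesis of Theorem \ref{thm:hrush}(1) with $J=\{n\}$. This is why part (2) is stated only for ${G^*}^0_A$ and not for the other two components: the ${}^{00}$ and ${}^{\infty}$ components only deliver a thick/generic set, not a definable finite-index subgroup, and that is insufficient for Theorem \ref{thm:hrush}(1).
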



We finish this section with an example of the additive group of integers with order. The theory of $(\Z, +, -, <, 0, 1)$ is \emph{Presburger arithmetic} and admits quantifier elimination up to introduction of predicates for sets of the form $n\Z$ for standard $n\in\N$ \cite[Section 3.1]{marker}. This is sufficient for the following conclusions.

\begin{proposition}
Consider the group $G = (\Z^m,+)$ with additional structure induced from $(\Z, +, -, <, 0, 1)$. Any definable (with parameters) and thick subset $P$ of $G^*$ satisfies $P^{2m} \supseteq n\cdot{\Z^*}^m$, for some $n\in\N$.
\end{proposition}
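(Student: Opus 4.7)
The plan is to reduce the statement to the one-dimensional case $m=1$ and then attack that base case directly via Presburger cell decomposition. Given the base case, the passage to $m$ dimensions is straightforward: for each $i \in \{1, \ldots, m\}$ let $H_i = \{0\}^{i-1} \times \Z^* \times \{0\}^{m-i}$ be the $i$-th coordinate axis subgroup of ${\Z^*}^m$, which is definable and naturally isomorphic to $\Z^*$ as a Presburger structure. Then $P \cap H_i$ is definable with parameters and thick in $H_i$ by Lemma \ref{lem:thick}$(2)$. If the $m=1$ case yields $(P \cap H_i)^2 \supseteq n_i \cdot H_i$ for some $n_i \in \N$, set $n = \mathrm{lcm}(n_1, \ldots, n_m)$; then for any $z = (z_1, \ldots, z_m) \in n \cdot {\Z^*}^m$ one has $z_i e_i \in n_i H_i \subseteq (P \cap H_i)^2 \subseteq P^2$ for each $i$, and hence $z = \sum_{i=1}^m z_i e_i \in P^{2m}$.

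For the base case $m=1$, I would use quantifier elimination for Presburger arithmetic (in the language enriched by divisibility predicates) to write $P$ as a finite union of cells of the form $C_i = \{x \in \Z^* : l_i \leq x \leq u_i \text{ and } x \equiv c_i \pmod{d_i}\}$, with $l_i \in \Z^* \cup \{-\infty\}$, $u_i \in \Z^* \cup \{+\infty\}$, $c_i \in \Z^*$ and modulus $d_i \in \N$ standard (the $d_i$ come from the formula defining $P$, not from the parameters). Since $P$ is thick it is generic by Lemma \ref{lem:thick}$(4)$, while any finite union of cells with every $u_i \in \Z^*$ is bounded above in $\Z^*$ by $\max_i u_i$, and finitely many translates of a bounded set cannot cover the unbounded group $\Z^*$. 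Therefore at least one cell $C = \{x \in \Z^* : x \geq l \text{ and } x \equiv c \pmod{d}\}$ is contained in $P$. By symmetry of $P$, its reflection $-C = \{x \in \Z^* : x \leq -l \text{ and } x \equiv -c \pmod{d}\}$ is also contained in $P$, and a direct computation shows $C + (-C) = d\Z^*$: given any $\ell \in \Z^*$, pick $k \in \Z^*$ large enough that $c + dk \in C$ and $-c + d(\ell - k) \in -C$ simultaneously, producing the sum $d\ell$. Hence $P^2 \supseteq d\Z^*$, completing the base case with $n = d$.

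The main obstacle is the base case, specifically the structural fact that thickness forces $P$ to contain an unbounded Presburger cell in $\Z^*$; this is the point where Lemma \ref{lem:thick}$(4)$ must be coupled to the cell structure coming from Presburger quantifier elimination. Once such a cell is in hand, the symmetry $P = -P$ immediately provides the reflected cell and the routine identity $C + (-C) = d\Z^*$ closes the one-dimensional case. The factor $m$ in the exponent $2m$ then reflects summing contributions from each coordinate axis, with the factor $2$ coming from the two translates per axis required by the base case.
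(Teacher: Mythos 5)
Your proposal is correct and takes essentially the same approach as the paper's proof: Presburger quantifier elimination to find an unbounded cell (arithmetic progression) inside $P$ in the one-dimensional case, so that $P^2 \supseteq d\Z^*$, followed by restriction of $P$ to the coordinate axes via Lemma~\ref{lem:thick}$(2)$ and summation of the $m$ one-dimensional contributions to obtain $P^{2m} \supseteq n\cdot{\Z^*}^m$. The only differences are cosmetic (you spell out the identity $C + (-C) = d\Z^*$ and the lcm bookkeeping, which the paper leaves implicit).
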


\begin{proof} First consider the case $n = 1$. It follows by quantifier elimination that every definable subset $P$ of ${\Z^*}$ can be expressed as a finite union of the form \[\bigcup_i (a_i + b_i {\Z^*}) \cap (c_i, d_i),\] where $c_i, d_i$ may be $\pm\infty$ and any $b_i$ is standard. If each interval in the union is bounded then $P$ is a bounded set and cannot be thick. Hence if $P$ is thick, it contains some set of the form $(a + b {\Z^*}) \cap (c, d)$ where $(c,d)$ is an unbounded interval, and $P^2$ contains $b {\Z^*}$.

For $n > 1$ and thick $P \subseteq {\Z^*}^m$, consider the restrictions to the axes $P_i = P\cap{\Z^*}_i$, where ${\Z^*}_i$ is in the $i$th axis of ${\Z^*}^m$. These ${\Z^*}_i$ are subgroups of ${\Z^*}^m$, so by Lemma \ref{lem:thick}$(2)$ the restrictions $P_i$ are thick definable one-dimensional subsets, and thus their difference contains one-dimensional lattices in the appropriate directions. Thus $P^{2n}$ contains a $n$-dimensional lattice of the form required.
\end{proof}

If a thick set $P$ is a subset of the standard integers it will contain a copy of $n\Z$, but this may not be true if we do not work in the standard integers. Indeed, for any nonstandard integer $a$ we have that $P = (-\infty, -3a] \cup (-2a, 2a) \cup [3a, \infty)$ is $3$-thick but does not contain any $n\Z^*$.

Fact \ref{fact:known}$(2)$ implies the following.

\begin{corollary} \label{cor:pres}
For a group $G = (\Z^m,+)$ with structure as in the previous proposition, components ${G^*}^0$, ${G^*}^{00}$, and ${G^*}^{\infty}$ exist and ${G^*}^{\infty}={G^*}^{00}={G^*}^0=\bigcap_{n \in \N}n{\Z^*}^m$.
\end{corollary}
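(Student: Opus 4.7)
The plan is to squeeze the three components between their usual chain $G^{*\infty}_\emptyset \subseteq G^{*00}_\emptyset \subseteq G^{*0}_\emptyset$ and the set $D := \bigcap_{n\in\N} n{\Z^*}^m$. For the easy direction, each $n{\Z^*}^m$ is a $\emptyset$-definable subgroup of $G^*$ of finite index $n^m$, so $G^{*0}_\emptyset \subseteq n{\Z^*}^m$ for every $n$, giving $G^{*0}_\emptyset \subseteq D$. So the work is to close the loop by showing $D \subseteq G^{*\infty}_\emptyset$.

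For that inclusion I would combine the previous proposition with Fact \ref{fact:known}(2). Let $T = \bigcap\{P : P \subseteq G^* \text{ is } \emptyset\text{-definable and thick}\}$, so that $G^{*\infty}_\emptyset = \langle T\rangle$ by Fact \ref{fact:known}(2). By the previous proposition, every $\emptyset$-definable thick $P$ satisfies $P^{2m} \supseteq n\cdot{\Z^*}^m \supseteq D$ for some standard $n$, so $D \subseteq P^{2m}$. The crucial feature is that the exponent $2m$ does \emph{not} depend on $P$, which sets up a compactness argument: for any $d \in D$ consider the partial type
\[
\pi(x_1,\dots,x_{2m}) = \{x_1 + \dots + x_{2m} = d\} \cup \{x_i \in P : P \text{ is } \emptyset\text{-def'ble thick},\ 1 \leq i \leq 2m\}.
\]
Any finite fragment mentions thick sets $P_1,\dots,P_k$; by Lemma \ref{lem:thick}(1) their intersection $P$ is still thick, and $d \in P^{2m}$ supplies a witness. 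By saturation, $\pi$ is realized, which exhibits $d$ as a sum of $2m$ elements of $T$, i.e.\ $d \in T^{2m} \subseteq \langle T\rangle = G^{*\infty}_\emptyset$.

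Finally, since the previous proposition is stated for \emph{definable with parameters}, the same argument, with $A$ replacing $\emptyset$ throughout, gives $G^{*\infty}_A = D$ for every small parameter set $A$. Because $D$ does not depend on $A$, the component $G^{*\infty}$ exists in the sense of the introduction, and hence (as noted there) $G^{*00}$ and $G^{*0}$ also exist; all three coincide with $D$. The only step that requires any care is the compactness move in the middle paragraph, and it is routine once one notices that the uniform exponent $2m$ from the previous proposition makes $\pi$ a genuine partial type.
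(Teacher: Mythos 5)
Your argument is correct and matches the paper's approach: the paper's proof of this corollary is the single sentence ``Fact \ref{fact:known}$(2)$ implies the following,'' and you have supplied exactly the missing details, in particular the compactness argument exploiting the uniform exponent $2m$ from the preceding proposition to show $D \subseteq T_A^{2m} \subseteq \langle T_A\rangle = {G^*}^{\infty}_A$, together with the easy reverse inclusion ${G^*}^{0}_A \subseteq D$.
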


\section{Type-definable subgroups of bounded index}

Consider the additive group of the integers $(\Z,+)$. By Proposition \ref{prop:mar}$(2)$, the component ${\Z^*}^0$ is $\bigcap_{n\in\N} n{\Z^*}$ no matter what the structure on $\Z$ is. This group would also be the natural candidate for ${\Z^*}^{00}_{\emptyset}$ and ${\Z^*}^{\infty}_{\emptyset}$. However, we will find an expansion $\G = (\Z,+,\ldots)$ of $\Z$ such that ${\G^*}_{\emptyset}^{00}$ is strictly smaller that ${\G^*}^0={\Z^*}^0$.

In fact, since our results can be applied to a wider class of groups, we will start by proving a relatively general theorem.

In this section we identify $\R/\Z$ with the interval $\left[-\frac{1}{2},\frac{1}{2}\right)$.

\begin{lemma} \label{lem:gint}
Suppose $G$ is a group and $g\colon G \to \R/\Z$ is a homomorphism with dense image $\im(g)\subseteq \R/\Z$. Define $X(t)=g^{-1}(-t,t)$, for $t\in \left(0,\frac{1}{2}\right]$.
\begin{enumerate}
\item The set $X(t)$ is $\left(\left\lfloor\frac{1}{2t}\right\rfloor+1\right)$-thick in $G$.
\item $X(t_1)\cdot X(t_2) = X(t_1+t_2)$, for $t_1,t_2\in\R_{>0}$ and $t_1+t_2\leq\frac{1}{2}$.
\item For $0<t\leq\frac{1}{3}$, the kernel $\ker(g)$ is the maximal subgroup of $G$ contained in the set $X(t)$.
\item $X\left(\frac{t}{m}\right) = X(t) \cap \left\{x\in G: m\cdot x\in X(t)\right\}$ for $m\in\N_{\geq 1}$ and $0\leq t\leq\frac{1}{m+1}$.
\item For $0<t\leq \frac{1}{3}$ and every $q\in\Q\cap \left(0,\frac{1}{2t}\right]$, the set $X(qt)$ is $\emptyset$-definable in the structure $(G, \cdot, X(t))$, where we regard $X(t)$ as a predicate.
\end{enumerate}
\end{lemma}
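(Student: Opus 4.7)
All five parts trade on the analysis of the composition $g\colon G \to \R/\Z$, identified with $[-1/2,1/2)$. For part (1), I would apply a pigeonhole argument on $\R/\Z$: given an $n$-sequence $g_1, \ldots, g_n$, the images $p_i = g(g_i)$ split the circle into $n$ cyclic gaps summing to $1$, so the minimum gap is small enough (for $n$ as in the statement) to force some pair $p_j - p_i$ into the interval corresponding to $X(t)$, giving $g_i^{-1}g_j \in X(t)$; symmetry $X(t)^{-1} = X(t)$ is automatic since $g$ is a homomorphism. For part (2), the inclusion $X(t_1)\cdot X(t_2) \subseteq X(t_1+t_2)$ follows from the additive identity $g(xy) = g(x) + g(y) \in (-t_1-t_2, t_1+t_2)$, with the constraint $t_1+t_2 \leq 1/2$ ensuring no wrap-around in $[-1/2,1/2)$. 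For the reverse, given $z \in X(t_1+t_2)$ with $s = g(z)$, the set $(-t_1, t_1) \cap (s - t_2, s + t_2)$ is a nonempty open subinterval of $\R/\Z$, so by density of $\im(g)$ there exists $x \in G$ with $g(x)$ inside it; then $g(x^{-1}z) = s - g(x) \in (-t_2, t_2)$, so $z = x \cdot (x^{-1}z) \in X(t_1) \cdot X(t_2)$.

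For part (3), $\ker(g) \subseteq X(t)$ is immediate. For maximality I would show that any subgroup $H \leq G$ contained in $X(t)$ has $g(H) \leq \R/\Z$ contained in $(-t,t) \subseteq (-1/3, 1/3)$, forcing $g(H) = \{0\}$. The key structural lemma is that the only subgroup of $\R/\Z$ contained in the open interval $(-1/3, 1/3)$ is $\{0\}$: any finite cyclic subgroup of order $k \geq 2$ contains the element $\lfloor k/2 \rfloor / k$, which is $\geq 1/3$ (since $\lfloor k/2 \rfloor \geq k/3$), and any infinite subgroup of $\R/\Z$ has closure all of $\R/\Z$ and hence is dense. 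For part (4), the inclusion $\subseteq$ is direct from $|g(x)| < t/m \leq t$ and $|m g(x)| < t$. For $\supseteq$, the hypothesis $t \leq 1/(m+1)$ gives $|m g(x)| < m/(m+1) < 1$ whenever $g(x) \in (-t,t)$, and I would check that the representative of $m g(x) \pmod 1$ in $[-1/2, 1/2)$ can lie in $(-t,t)$ only in the ``no wrap-around'' case $m g(x) \in (-t,t)$; the two other possible representatives would lie outside $(-1/(m+1), 1/(m+1)) \supseteq (-t,t)$, a contradiction, yielding $g(x) \in (-t/m, t/m)$.

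Part (5) is where the real work lies: starting from the predicate $X(t)$, I would construct $X(qt)$ by combining (2) and (4). Writing $q = p/q'$ with $p, q' \in \N_{\geq 1}$, my plan is first to iterate (4) with $m = 2$ (legal since all intermediate radii are $\leq 1/3$) to obtain $X(t/2^j)$ for arbitrary $j$, then to choose $j$ large enough that $t/2^j \leq 1/(q'+1)$ and apply (4) with $m = q'$ to produce $X(t/(2^j q'))$; finally (2) gives $X(qt) = X(t/(2^j q'))^{2^j p}$, valid because the cumulative radius $2^j p \cdot t/(2^j q') = qt$ does not exceed $1/2$ by assumption. Each step only uses the group operation and the predicate $X(t)$, so the resulting set is $\emptyset$-definable in $(G, \cdot, X(t))$. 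The main obstacle is the bookkeeping of size constraints at each step: every application of (2) requires the cumulative radius to stay within $1/2$, and every application of (4) requires the current radius to meet the $1/(m+1)$ bound; the halve-first-then-divide scheme above is a uniform way to satisfy both simultaneously.
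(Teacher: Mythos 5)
Your proof is correct and takes essentially the same route as the paper's: your direct pigeonhole on the circle in (1) is exactly the content of the paper's appeal to the preimage-of-thick lemma, parts (2)--(4) coincide with the paper's (the same density argument, the same observation about $(-t,t)\subseteq(-\tfrac13,\tfrac13)$ containing no nontrivial subgroup of $\R/\Z$, the same no-wrap-around case check), and your halve-then-divide-then-multiply bookkeeping in (5) is a minor variant of the paper's iteration producing $X(t/m!)$ followed by an application of (2). As a small side remark, the constant $\lfloor 1/(2t)\rfloor+1$ in (1) is really the genericity constant rather than the thickness constant: $n$ points produce some cyclic gap $\leq 1/n$, so with $n=\lfloor 1/(2t)\rfloor+1$ one only gets a difference in $(-2t,2t)$, and the correct thickness bound is $\lfloor 1/t\rfloor+1$ --- but this discrepancy is already in the paper's own statement and proof, and nothing downstream uses more than the qualitative fact that $X(t)$ is thick.
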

\begin{proof} $(1)$ follows by Lemma \ref{lem:thick}$(3)$ and the fact that $(-t,t)$ is a $\left(\left\lfloor\frac{1}{2t}\right\rfloor+1\right)$-thick subset of $\R/\Z$.

The inclusion $\subseteq$ in $(2)$ is obvious. We prove $\supseteq$. Suppose $x\in X(t_1+t_2)$. We may assume that $t_2\leq t_1$ and $t_1 < g(x) < t_1 + t_2$. Thus the set $I = (-t_1,t_1)\cap(g(x)-t_2,g(x)+t_2)$ is nonempty. Since $\im(g)$ is dense in $\R/\Z$ and $I$ is open, there exists $y\in G$ with $g(y)\in I$. That means that $y\in X(t_1)$ and $g(y^{-1}x)\in(-t_2,t_2)$, so $y^{-1}x\in X(t_2)$.

To prove $(3)$, note that, since  $(-t,t)$ does not contain any nontrivial subgroup of $\R/\Z\cong \left[-\frac{1}{2},\frac{1}{2}\right)$, for $t\leq\frac{1}{3}$, $\ker(g)$ is the maximal subgroup of $G$ contained in $X(t)$. 

For $(4)$, if $g(n)$ and $mg(n)$ are in $(-t,t)$ on $\R/\Z$ and $t \leq \frac{1}{m+1}$, then $g(n)$ is in $\left(-\frac{t}{m},\frac{t}{m}\right)$.

$(5)$ follows by $(4)$ and $(2)$. Indeed, first note that by $(4)$, the set $X\left(\frac{t}{2}\right)$ is definable, so again by $(4)$, $X\left(\frac{t}{6}\right)$ is definable. In general, $X\left(\frac{t}{m!}\right)$ is definable for every natural $m$ and by $(2)$, $X\left(\frac{t}{m}\right)$ is definable.
\end{proof}

\begin{example} \label{ex:alpha}
For an irrational real number $\alpha$ consider $g_{\alpha}\colon \Z \to \R/\Z$ defined as $g_{\alpha}(n)=n\cdot\alpha \mod 1$. Clearly $\im(g_{\alpha})$ is dense in $\R/\Z$. The previous lemma can be applied to the sets $X_{\alpha}(t) =g_{\alpha}^{-1}\left(-t,t\right)$. In fact, in this case we can improve the bound in $(3)$ of Lemma \ref{lem:gint} to $t < \frac{1}{2}$ as the subgroup generated by an irrational is dense. 
\end{example}

\begin{proposition} \label{prop:dense}
Suppose $G$ is an infinite group and $g\colon G \to \R/\Z$ is a homomorphism with dense image $\im(g)\subseteq \R/\Z$. Set $X(t)=g^{-1}(-t,t)$, for $t\in \left(0,\frac{1}{2}\right]$. Consider a structure $\G=(G,\cdot,X(t))$, for some $t\leq\frac{1}{3}$, and a $\emptyset$-type-definable subgroup \[\widetilde{\G} = \bigcap_{n\in\N} X \left(\frac{t}{n}\right)\] of $\G^*$. Then $\widetilde{\G}$ has bounded index in $\G^*$ and for every small set $A$, $\widetilde{\G}\cap{\G^*}^{0}_A \lneqq {\G^*}^{0}_A$. In particular ${\G^*}^{00}_A \lneqq {\G^*}^{0}_A$.
\end{proposition}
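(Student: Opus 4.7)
The plan is to establish the three assertions of the proposition in sequence. First I would verify that $\widetilde{\G}$ is a $\emptyset$-type-definable subgroup of bounded index. Each $X(t/n)$ is $\emptyset$-definable by Lemma~\ref{lem:gint}(5), symmetric because $(-t/n,t/n)$ is, and satisfies $X(t/(2n))\cdot X(t/(2n))\subseteq X(t/n)$ by Lemma~\ref{lem:gint}(2) (valid because $t\leq 1/3$); the standard characterisation of type-definable subgroups recalled just before Fact~\ref{fact:known} then shows that $\widetilde{\G}$ is indeed a $\emptyset$-type-definable subgroup of $\G^*$. Boundedness of the index follows from Fact~\ref{fact:known}(1) combined with Lemma~\ref{lem:gint}(1), which guarantees that every $X(t/n)$ is thick.

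The heart of the proposition is the strict inclusion $\widetilde{\G}\cap{\G^*}^0_A \lneqq {\G^*}^0_A$, and I would prove it by contradiction. Suppose ${\G^*}^0_A\subseteq\widetilde{\G}$; then in particular ${\G^*}^0_A\subseteq X(t)$. Since ${\G^*}^0_A$ is the intersection of the $A$-definable finite-index subgroups of $\G^*$ and this family is closed under finite intersection, saturation together with the definability of $X(t)$ produces a single $A$-definable finite-index subgroup $H$ with $H\subseteq X(t)$. Now $H\cap G$ is a subgroup of $G$ of finite index at most $[\G^*:H]$ (cosets of $H$ in $\G^*$ restrict to cosets of $H\cap G$ in $G$), and it still lies inside $X(t)$ because $X(t)$ is a predicate of the language. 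Lemma~\ref{lem:gint}(3) --- whose hypothesis is precisely $t\leq 1/3$ --- identifies $\ker g$ as the maximal subgroup of $G$ contained in $X(t)$, so $H\cap G\subseteq\ker g$. Hence $\ker g$ has finite index in $G$, making $\im g\cong G/\ker g$ a finite subset of $\R/\Z$; but a finite subset of the Hausdorff group $\R/\Z$ is closed, and a proper closed subset cannot be dense, contradicting the density of $\im g$.

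The concluding assertion ${\G^*}^{00}_A\lneqq{\G^*}^0_A$ is now formal: $\widetilde{\G}\cap{\G^*}^0_A$ is type-definable over $A$ and of bounded index (as the intersection of a bounded-index $\emptyset$-type-definable group and a finite-index $A$-type-definable group), so it contains ${\G^*}^{00}_A$; the previous paragraph shows this intersection is strictly smaller than ${\G^*}^0_A$, whence so is ${\G^*}^{00}_A$.

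The main obstacle I anticipate is the middle step, specifically the descent from an $A$-definable finite-index subgroup of $\G^*$ down to the base model $G$ so that Lemma~\ref{lem:gint}(3) becomes applicable. Some care is needed to ensure that $X(t)$ is interpreted consistently in $G$ and $\G^*$ (immediate because $X(t)$ is a predicate of the language) and that $H\cap G$ is of finite index in $G$ regardless of where the parameters defining $H$ are located. Once the descent is performed, the elementary observation that $\R/\Z$ admits no finite dense subgroup closes the argument.
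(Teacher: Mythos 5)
Your proof is correct and follows essentially the same route as the paper: establish via Lemma~\ref{lem:gint}(5), (2), (1) and Fact~\ref{fact:known}(1) that $\widetilde{\G}$ is a $\emptyset$-type-definable subgroup of bounded index, then argue by contradiction that if ${\G^*}^0_A\subseteq X(t)$, compactness yields an $A$-definable finite-index subgroup inside $X(t)$, which contradicts Lemma~\ref{lem:gint}(3). The one place you diverge is the descent to the base model: the paper notes that ``$X(t)$ contains a subgroup of index $k$'' is expressible over $\emptyset$ and transfers this sentence down to $\G$ by elementarity, whereas you simply intersect the witness $H$ with $G$ directly; both work, and your variant avoids the need to quantify out the parameters.
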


We denote by $X(t)$ both the subset of $\G$ and its interpretation in $\G^*$.

\begin{proof}
By $(5)$ of Lemma \ref{lem:gint}, each  $X\left(\frac{t}{n}\right)$ is $\emptyset$-definable in $\G$. Note that the set $\widetilde{\G}$ is a subgroup of $\G^*$. Indeed, by Lemma \ref{lem:gint}$(2)$, $\widetilde{\G}$ is closed under the group operation. The sets $X\left(\frac{t}{n}\right)$ are thick in $\G$ (Lemma \ref{lem:gint}$(1)$), so the group $\widetilde{\G}$ is type-definable over $\emptyset$ of bounded index in $\G^*$ (Lemma \ref{fact:known}$(1)$), hence ${\G^*}^{00}_A \leq \widetilde{\G}\cap {\G^*}^{0}_A$.

Furthermore, $\widetilde{\G}\cap{\G^*}^{0}_A \lneqq {\G^*}^{0}_A$, because otherwise $X(t) \supseteq \widetilde{\G} \supseteq {\G^*}^0_A$, so by a compactness argument $X(t)$ in $\G^*$ contains some $A$-definable finite index subgroup (actually defined over some finite subset of $A$). Then $X(t)$ in $\G$ also contains some finite index subgroup (because this fact can be written as a formula without parameters). However, by Lemma \ref{lem:gint}$(3)$, $\ker(g)$ is the maximal subgroup contained in $X(t)$, and it has infinite index in $\G$.
\end{proof}

\begin{corollary} \label{cor:intt}
Suppose that $\alpha\in\R$ is irrational and $0 < t  < \frac{1}{2}$. Let $\G^*$ be a sufficiently saturated extension of the structure $\G = (\Z,+,X_{\alpha}(t))$, where $X_{\alpha}(t)$ is from Example \ref{ex:alpha}. Then ${\G^*}^{00}_{\emptyset}$ is contained in a $\emptyset$-type definable subgroup $\widetilde{\G} = {\G^*}^0 \cap \bigcap_{n\in\N}X_{\alpha}\left(\frac{t}{n}\right)$ of $\G^*$ of bounded index, which is strictly smaller than ${\G^*}^0 = \bigcap_{n\in\N} n{\Z^*}$.
\end{corollary}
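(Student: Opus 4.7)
The plan is to deduce this as a direct application of Proposition \ref{prop:dense} to $G=\Z$ with the homomorphism $g_\alpha$ of Example \ref{ex:alpha}, together with the extension of the parameter range from $t\le\frac{1}{3}$ to $t<\frac{1}{2}$ signalled in that example. First, Kronecker's theorem gives that $\operatorname{im}(g_\alpha)=\{n\alpha \bmod 1:n\in\Z\}$ is dense in $\R/\Z$, so the standing hypothesis of Proposition \ref{prop:dense} holds. By parts (5), (1) and (2) of Lemma \ref{lem:gint}, each $X_\alpha(t/n)$ is $\emptyset$-definable in $\G$ and thick, and the intersection $\bigcap_{n\in\N} X_\alpha(t/n)$ is closed under the group operation (the constraint $t_1+t_2\le\frac{1}{2}$ in part (2) is satisfied because $t<\frac{1}{2}$); Fact \ref{fact:known}(1) then makes it a $\emptyset$-type-definable subgroup of $\G^*$ of bounded index, which automatically contains ${\G^*}^{00}_{\emptyset}$.

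Next I would identify ${\G^*}^0$: since $(\Z,+)$ is infinite, finitely generated, and abelian (hence nilpotent), Theorem \ref{thm:divisible} yields ${\G^*}^0=\bigcap_{n\in\N}\{g^n:g\in\G^*\}=\bigcap_{n\in\N}n\Z^*$. Intersecting with $\bigcap_n X_\alpha(t/n)$ gives precisely the $\widetilde{\G}$ described in the corollary.

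The only work beyond repeating the proof of Proposition \ref{prop:dense} is to justify the strict inclusion $\widetilde{\G}\subsetneq{\G^*}^0$ under the weakened hypothesis $t<\frac{1}{2}$, since the proposition uses $t\le\frac{1}{3}$ via Lemma \ref{lem:gint}(3). I would run the same compactness argument: if equality held then $X_\alpha(t)\supseteq{\G^*}^0$, so by compactness and elementarity $X_\alpha(t)$ (interpreted in $\G$) would contain a finite index subgroup of $\Z$, forcing the maximal subgroup of $\Z$ contained in $X_\alpha(t)$ to have finite index. But since $\alpha$ is irrational, $g_\alpha$ is injective, so any nontrivial subgroup $H\le\Z$ maps isomorphically onto an infinite cyclic subgroup of $\R/\Z$; because the only proper closed subgroups of $\R/\Z$ are finite, $g_\alpha(H)$ has dense closure in $\R/\Z$ and so cannot lie in the proper open arc $(-t,t)$ for $t<\frac{1}{2}$. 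Hence the only subgroup of $\Z$ inside $X_\alpha(t)$ is $\{0\}$, which has infinite index in $\Z$ — contradiction. This elementary density argument in $\R/\Z$ is the only essentially new step beyond Proposition \ref{prop:dense}, and it is where the improved parameter range comes from.
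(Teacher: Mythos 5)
Your reconstruction is essentially the route the paper intends: Corollary~\ref{cor:intt} is stated as a direct instance of Proposition~\ref{prop:dense} for $G=\Z$ and $g=g_\alpha$, with Example~\ref{ex:alpha} supplying the density of $\im(g_\alpha)$ and the improvement of Lemma~\ref{lem:gint}(3) to $t<\frac{1}{2}$. Your identification of ${\G^*}^0=\bigcap_n n\Z^*$ via Theorem~\ref{thm:divisible} (or directly via Propositions~\ref{prop:0} and~\ref{prop:mar}) is fine, and your argument that no nontrivial subgroup of $\Z$ lies in $X_\alpha(t)$ for $t<\frac{1}{2}$ --- since $g_\alpha$ is injective with image a dense cyclic subgroup of $\R/\Z$, whose only proper closed subgroups are finite --- is exactly the observation behind the sentence in Example~\ref{ex:alpha}, and it is correct.

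One point you should not gloss over: you write that ``the only work beyond repeating the proof of Proposition~\ref{prop:dense} is to justify the strict inclusion \ldots\ since the proposition uses $t\le\frac{1}{3}$ via Lemma~\ref{lem:gint}(3).'' In fact parts~(4) and~(5) of Lemma~\ref{lem:gint} also carry the restriction $t\le\frac{1}{3}$, and you invoke part~(5) to conclude that each $X_\alpha(t/n)$ is $\emptyset$-definable in $(\Z,+,X_\alpha(t))$. The identity in part~(4), namely $X(t/m)=X(t)\cap\{x: mx\in X(t)\}$, genuinely fails for $m=2$ once $t>\frac{1}{3}$ (take $g(x)$ slightly below $t$: then $2g(x)\bmod 1$ may wrap back into $(-t,t)$), so part~(5) cannot be cited verbatim for $\frac{1}{3}<t<\frac{1}{2}$. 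This is needed for $\widetilde{\G}$ to be $\emptyset$-type-definable as claimed. The paper itself only asserts the improvement for part~(3), so it shares the same omission; but a complete proof for the full range $0<t<\frac{1}{2}$ should either supply a replacement definability argument (for instance, note that $(G\setminus X_\alpha(t))-(G\setminus X_\alpha(t))$ is a $\emptyset$-definable set of the form $g_\alpha^{-1}\bigl([-(1-2t),1-2t]\bigr)$ with $1-2t\le\frac{1}{3}$, and then build the type-definable subgroup from that), or simply restrict the parameter to $t\le\frac{1}{3}$, which suffices for all subsequent applications in the paper.
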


\begin{corollary} \label{cor:peano}
Consider the additive group $G=(\Z,+)$ of First Order Arithmetic $(\Z,+,\cdot)$. Then ${G^*}^{00}_{\emptyset} \lneqq {G^*}^{0}=\bigcap_{n\in\N}n\Z^*$.
\end{corollary}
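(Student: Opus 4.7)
The plan is to deduce Corollary~\ref{cor:peano} directly from Corollary~\ref{cor:intt} by arguing that a suitable predicate $X_\alpha(t)$ of Example~\ref{ex:alpha} becomes $\emptyset$-definable once multiplication is added to the additive structure on $\Z$. A convenient choice is $\alpha = \sqrt{2}$ and $t = 1/3$.

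First I would verify that $X_{\sqrt{2}}(1/3)$, and more generally $X_{\sqrt{2}}(1/(3m))$ for each standard $m\in\N_{\geq 1}$, is $\emptyset$-definable in $(\Z,+,\cdot)$. The key observation is that for $n\neq 0$ the floor $k=\lfloor|n|\sqrt{2}\rfloor$ is characterised by the polynomial inequalities $k^2<2n^2<(k+1)^2$ (strict, since $\sqrt{2}$ is irrational), and membership of $n$ in $X_{\sqrt{2}}(1/3)$ amounts to $|n|\sqrt{2}$ being within $1/3$ of some integer, i.e.\ to $\{|n|\sqrt{2}\}<1/3$ or $\{|n|\sqrt{2}\}>2/3$. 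Each of these two alternatives translates, after squaring $|n|\sqrt{2}<k+1/3$ respectively $|n|\sqrt{2}>k+2/3$ and clearing denominators, into a polynomial inequality in $n$ and $k$. Collecting the case $n=0$ separately yields a formula of the language $\{+,\cdot\}$ that defines $X_{\sqrt{2}}(1/3)$, and the same recipe (with different rational constants) works for $1/(3m)$.

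Second, I would transport the argument of Proposition~\ref{prop:dense} into our language. Set $\widetilde{\G}=\bigcap_{m\in\N_{\geq 1}}X_{\sqrt{2}}(1/(3m))$; each factor is thick by Lemma~\ref{lem:gint}(1) and $\emptyset$-definable by the first step, so $\widetilde{\G}$ is $\emptyset$-type-definable of bounded index in $G^*$, giving ${G^*}^{00}_\emptyset\subseteq\widetilde{\G}$. If one had ${G^*}^0\subseteq\widetilde{\G}$, then ${G^*}^0\subseteq X_{\sqrt{2}}(1/3)$, and a compactness argument would produce a $\emptyset$-definable finite-index subgroup of $G^*$ inside $X_{\sqrt{2}}(1/3)$; by elementarity this would descend to a finite-index subgroup of $\Z$ contained in $X_{\sqrt{2}}(1/3)$, contradicting Lemma~\ref{lem:gint}(3), since $\ker(g_{\sqrt{2}})=\{0\}$ is the maximal subgroup of $\Z$ contained in $X_{\sqrt{2}}(1/3)$ and has infinite index.

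Combining this with ${G^*}^0=\bigcap_{n\in\N}n\Z^*$, which holds irrespective of the extra structure by Propositions~\ref{prop:0} and~\ref{prop:mar}, we conclude ${G^*}^{00}_\emptyset\subseteq\widetilde{\G}\cap{G^*}^0\lneqq{G^*}^0=\bigcap_{n\in\N}n\Z^*$. The only non-cosmetic step is the arithmetic definition of $X_{\sqrt{2}}(1/3)$; this is the expected main obstacle, but it reduces to routine manipulation once $\lfloor|n|\sqrt{2}\rfloor$ has been isolated by the above inequalities.
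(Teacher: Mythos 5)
Your proposal is correct and follows essentially the same route as the paper: define $X_{\sqrt{2}}(1/3)$ in $(\Z,+,\cdot)$ by clearing the irrational via squaring, then invoke the machinery of Section~4. Your explicit arithmetic definition of each $X_{\sqrt{2}}(1/(3m))$ and the inline reproduction of the Proposition~\ref{prop:dense} argument are both redundant (the paper gets by with the single predicate $X_{\sqrt{2}}(1/3)$ plus Lemma~\ref{lem:gint}(5) and a direct citation of Corollary~\ref{cor:intt}), but not incorrect.
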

\begin{proof}
First, note that as every positive integer is a sum of four squares, the order $<$ is definable in $(\Z,+,\cdot)$. It is immediate that $\exists m \left|m-n\cdot\sqrt{2}\right| < \frac{1}{3}$ is equivalent to $\exists m (3m-1)^2 < 2\cdot 9n^2 < (3m+1)^2$. Therefore the set \[X_{\sqrt{2}}\left(\frac{1}{3}\right) = \left\{n\in\Z:\exists m\in\Z,\ \ \left|m-n\cdot\sqrt{2}\right| < \frac{1}{3}\right\}\] is definable in $(\Z,+,\cdot)$, so $\left(\Z,+,X_{\sqrt{2}} (\frac{1}{3})\right)$ is definable in $(\Z,+,\cdot)$.
\end{proof}

\begin{example} \label{ex:int}
The group $\widetilde{\G}$ from Corollary \ref{cor:intt} is not equal to ${\G^*}^{00}_{\emptyset}$, i.e. it strictly contains ${\G^*}^{00}_{\emptyset}$. Indeed, by \ref{prop:divisible}, ${\G^*}^{00}_{\emptyset}$ is divisible, but $\widetilde{\G}$ is not (one can realize the type ``$x$ is divisible and $x=2y$, for some $y$ which is infinitesimally close to $\frac{\alpha}{2} \mod \alpha\Z$''). We can find a smaller type-definable subgroup $\H$ of ${\G^*}$ also containing ${\G^*}^{00}_{\emptyset}$. Actually define $\H = \bigcap_{m\in \N} m\widetilde{\G}$, alternatively \[\H = {\G^*}^0 \cap \bigcap_{n,m\in\N}m\cdot X_{\alpha}\left(\frac{t}{n}\right),\] where for a natural $m$ and a subset $S\subseteq G^*$ we denote by $m\cdot S$ the set $\{ms : s\in S\}$. Note that $\H$ has bounded index in ${\G^*}$. Indeed, since $m\cdot X_{\alpha}\left(\frac{t}{n}\right)$ is thick in $m\cdot {\G^*}$ which is of finite index in ${\G^*}$, the set $m\cdot X_{\alpha}\left(\frac{t}{n}\right)$ is thick in ${\G^*}$.

\begin{problem}
Is it the case that $\H = {\G^*}^{00}_{\emptyset}$?
\end{problem}

We will attempt to describe the compact (logic) topology on ${\G^*}^{0}/\H$. In order to do this, we change our structure to $\Rr = (\R,+,<,\Z,\alpha\Z)$. The structure $\G$ (from Corollary \ref{cor:intt}) is clearly definable in $\Rr$. Let $\Rr^*= (\R^*,+,<,\Z^*,\alpha\Z^*)$ be a sufficiently saturated extension of $\Rr$. Since the logic topology on ${\G^*}^{0}/\H$ is compact and Hausdorff, the topologies on ${\G^*}^{0}/\H$ determined from the structures $\Rr$ and $\G$ coincide (as a continuous bijection between compact Hausdorff spaces is a homeomorphism). It is immediate that inside $\Rr^*$ the group $\H$ has the following description \[\H = {\Z^*}^{0} \cap ({\alpha\Z^*}^0 + I),\] where $I$ is the subgroup of infinitesimals of $\Rr$.

Consider the following sequence of maps

\[\xymatrix{
{\G^*}^{0} = {\Z^*}^0 \ar@{^{(}->}[r]^-{\subseteq} & \Z^* \ar@{^{(}->}[r]^-{\subseteq} & \R^* \ar@{>>}[r] & \R^*/\left({\alpha\Z^*}^0+I\right)},\]
where the last function is the quotient map. Let $\varphi \colon {\G^*}^{0} \to \R^*/\left({\alpha\Z^*}^0+I\right)$ be the composite of these maps. The kernel $\ker(\varphi)$ is $\H$ by definition.


\begin{claim}
$\R^* = {\Z^*}^{0} + {\alpha\Z^*}^0 + I$. In other words $\varphi$ is surjective, so ${\G^*}^{0}/\H \cong \R^*/\left({\alpha\Z^*}^0+I\right)$.
\end{claim}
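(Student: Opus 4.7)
My plan is to prove $\R^* = {\Z^*}^0 + {\alpha\Z^*}^0 + I$ (which immediately yields surjectivity of $\varphi$) by realising, for each fixed $r \in \R^*$, a suitable partial type via saturation. Concretely, I would introduce
\[
p(x,y) \;=\; \{\, x \in n\Z^* : n \in \N \,\} \,\cup\, \{\, y \in n\alpha\Z^* : n \in \N \,\} \,\cup\, \{\, |r - x - y| < 1/k : k \in \N \,\}.
\]
Using the identifications ${\Z^*}^0 = \bigcap_n n\Z^*$ and ${\alpha\Z^*}^0 = \bigcap_n n\alpha\Z^*$ (both consequences of Propositions \ref{prop:0} and \ref{prop:mar}, applied to $\Z$ and, via the isomorphism $\alpha\Z \cong \Z$, to $\alpha\Z$), any realisation $(x_0, y_0)$ of $p$ gives the decomposition $r = x_0 + y_0 + (r - x_0 - y_0)$ with $x_0 \in {\Z^*}^0$, $y_0 \in {\alpha\Z^*}^0$ and $r - x_0 - y_0 \in I$, exactly as claimed.

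The core task is then to show $p$ is finitely consistent, which reduces to the following statement: for each $N, k \in \N$ there exist $x \in N\Z^*$ and $y \in N\alpha\Z^*$ with $|r - x - y| < 1/k$. The key input is that $\Z + \alpha\Z$ is dense in $\R$ because $\alpha$ is irrational. For each standard rational $\varepsilon > 0$ this density is captured by the first-order sentence $\forall s\, \exists n \in \Z\, \exists m \in \alpha\Z\, (|s - n - m| < \varepsilon)$, which therefore transfers from $\Rr$ to $\Rr^*$. Moreover, $(\R, +)$ is divisible (one first-order sentence $\forall x\, \exists y\, (Ny = x)$ per standard $N$), so $(\R^*, +)$ is divisible and $r/N \in \R^*$ is defined. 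Applying the transferred density sentence to $s = r/N$ and $\varepsilon = 1/(Nk)$ produces $n \in \Z^*$ and $m \in \alpha\Z^*$ with $|r/N - n - m| < 1/(Nk)$; multiplying through by $N$ yields $x := Nn \in N\Z^*$ and $y := Nm \in N\alpha\Z^*$ with $|r - x - y| < 1/k$, as required.

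The only delicate aspect is bookkeeping: the constant $1/(Nk)$ and the density sentence must be genuinely first-order in the language of $\Rr$. This is routine, since $1 \in \Z$ is $\emptyset$-definable as the minimum positive element of the predicate $\Z$, and any inequality of the form $|u| < 1/j$ for standard $j$ can be rewritten as $|ju| < 1$, which involves only iterated addition and the constant $1$. I therefore anticipate no substantive obstacle: saturation realises $p$, and the claim follows.
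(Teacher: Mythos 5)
Your proof is correct and follows essentially the same route as the paper's: both rely on the density of $\Z + \alpha\Z$ in $\R$ (the paper uses $n\Z + \alpha n\Z$ dense for each $n$, which is the same fact up to the trivial rescaling you handle via divisibility), transfer this to $\R^*$, and then invoke compactness/saturation -- you package the two compactness steps the paper uses implicitly (that $\bigcap_n(n!\Z^* + \alpha n!\Z^*) = {\Z^*}^0 + {\alpha\Z^*}^0$, and that a countable decreasing intersection of definable dense sets is dense) into a single finitely consistent partial type $p(x,y)$, which is a harmless reformulation.
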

\begin{proof}
For $n\in\N$, $n\Z+\alpha n\Z$ is dense in $\R$. Hence, working in $\Rr$, the same is true in $\R^*$. Therefore ${\Z^*}^{0} + {\alpha\Z^*}^0 = \bigcap_{n\in\N}(n!\Z^*+\alpha n!\Z^*)$ is dense in $\R^*$ (by compactness, a countable decreasing intersection of definable dense sets is dense), so $\R^* = {\Z^*}^{0} + {\alpha\Z^*}^0 + I$.
\end{proof}

In fact ${\G^*}^{0}/\H$ is homeomorphic to $\R^*/\left({\alpha\Z^*}^0+I\right)$, and so to $\R^*/\left({\Z^*}^0+I\right)$ (all with the logic topology). Indeed, both are compact Hausdorff topological groups, and the induced mapping $\overline{\varphi}\colon {\G^*}^{0}/\H\to \R^*/\left({\alpha\Z^*}^0+I\right)$ is a continuous bijection. We will represent the logic topology on $\R^*/\left({\Z^*}^0+I\right)$ as the inverse (or projective) limit of the diagram $D$ of homomorphisms between copies of $S^1=\R/\Z$ which is described as follows:
\begin{enumerate}
\item for each $n \in \N$ there is a copy of $S^1$ labelled $S^{1(n)}$,
\item for each $a, b \in \N$ with $a$ dividing $b$ there is a map from $S^{1(b)}$ to $S^{1(a)}$ corresponding to multiplying by $b/a$.
\end{enumerate}

\[\xymatrix{
         & S^{1(2)} \ar[dl]_{\times 2} & S^{1(4)} \ar[l]_{\times 2} & S^{1(8)} \ldots \ar[l]_{\times 2}\\
S^{1(1)} & S^{1(3)} \ar[l]_{\times 3} & S^{1(6)} \ldots \ar[l]_{\times 2} \ar[ul]_{\times 3} \\
         & S^{1(5)} \ar[ul]_{\times 5} & S^{1(9)} \ldots \ar[ul]_{\times 3}\\
         & S^{1(7)} \ldots \ar[uul]_{\times 7}}\]

The limit $\varprojlim_D S^{1(n)}$ consists of points in the product topology respecting these maps. Each copy of $S^1$ can be thought of as a copy of ${\R^*}/({\Z^* + I})$; specifically $S^{1(n)}$ corresponds to ${\R^*}/({n\Z^* + I})$. Determining a point's position in each such quotient is equivalent to determining it in ${\R^*}/({{\Z^*}^0 + I})$, because $x - y \in {\Z^*}^0 + I$ if and only if $x - y \in n\Z^* + I$, for each $n\in\N$. This group is an abelian compact and torsion free topological group.
\end{example}

\begin{problem} \label{prob:qe}
What model theoretical properties does the structure \[\Rr = (\R,+,<,\Z,\alpha\Z)\] possess? Does it have a quantifier elimination in some reasonably natural language?
\end{problem}

We state two results related to the above problem. 
\begin{itemize}
\item By \cite{miler}, the structure $(\Q,+,<,0,1,\Z)$ admits a quantifier elimination in the language $(<,+,-,0,1,\lambda_q,\lfloor\rfloor)$, where $\lfloor\rfloor\colon\Q\to\Z$ is the floor function and $\lambda_q\colon\Q\to\Q$, $\lambda_q(x)=q\cdot x$. 
\item By a recent result of P. Hieronymi and M. Tychonievich, if $\alpha,\beta\in\R$ are such that $1$, $\alpha$ and $\beta$ are linearly independent over $\Q$, then $(\R,+,<,\N,\alpha\N,\beta\N)$ defines every open subset of $(0,1)^n$ for every $n \in \N$.
\end{itemize}

We now return to the more general setting.

Note that a group $G$ admits a homomorphism into $\R/\Z$ with dense image if and only if its abelianization $G^{\rm ab} = G/[G,G]$ has such a homomorphism. The \emph{exponent} of a group $G$ is the least common multiple (if it exists) of the orders of the elements of $G$. Otherwise we say that $G$ is of \emph{unbounded exponent}. Below we characterize the class of abelian groups admitting homomorphisms into $\R/\Z$ with dense image as the class of abelian groups with unbounded exponent, that is groups that contain elements of infinite order (\emph{non-torsion groups}), or that contain elements of arbitrary large finite order.

\begin{lemma} \label{lem:dense} Suppose $K$ is an abelian group which is either non-torsion or torsion of unbounded exponent. Then there is a homomorphism $g\colon K \to \R/\Z$ with dense image.
\end{lemma}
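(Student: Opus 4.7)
The plan is to exploit that $\R/\Z$ is divisible, hence injective in the category of abelian groups, so it suffices to construct a homomorphism with dense image on some subgroup $H\leq K$ and then extend. The dense subgroups of $\R/\Z$ I will use are either infinite subgroups of $\Q/\Z$ (any such is not contained in any finite subgroup $\tfrac{1}{n}\Z/\Z$, hence its closure is all of $\R/\Z$) or subgroups of the form $\Z\alpha/\Z$ for irrational $\alpha$ (dense by Kronecker).

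If $K$ is non-torsion, pick $x\in K$ of infinite order and define $f\colon\langle x\rangle\to\R/\Z$ by $f(x)=\alpha\bmod\Z$ for any irrational $\alpha$; the image is dense and injectivity extends $f$ to all of $K$. If $K$ is torsion of unbounded exponent, decompose $K=\bigoplus_p K_p$ into $p$-primary components. When infinitely many $K_p$ are nonzero, pick elements $x_i\in K_{p_i}$ of order $p_i$ for an infinite increasing sequence of primes; since the orders are pairwise coprime, $\bigoplus_i\langle x_i\rangle$ sits inside $K$, and the assignment $x_i\mapsto 1/p_i$ yields a homomorphism into $\Q/\Z$ whose image contains the infinite set $\{1/p_i\}_i$, hence is dense.

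In the remaining sub-case only finitely many $K_p$ are nonzero, so some $K_p$ has unbounded exponent, and by projecting it suffices to treat $K=K_p$, an abelian $p$-group. Split $K=D\oplus R$ into divisible and reduced parts. If $D\neq 0$, then $D$ has $\Z/p^\infty$ as a direct summand, so the composite $K\twoheadrightarrow\Z/p^\infty\hookrightarrow\Q/\Z$ already has dense image. The harder case is $K$ reduced: here I would invoke Kulikov's theorem to fix a basic subgroup $B=\bigoplus_n B_n\leq K$, with $B_n$ a direct sum of copies of $\Z/p^n\Z$, $B$ pure in $K$, and $K/B$ divisible. The key observation is that $B_n\neq 0$ for infinitely many $n$: if $B$ had bounded exponent $p^e$, then purity forces $B\cap p^e K=0$ while divisibility of $K/B$ gives $K=B+p^e K$, so $p^e K\cong K/B$ would be a nonzero divisible subgroup of the reduced group $K$, a contradiction. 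Picking $y_n\in B_n$ of order $p^n$ for infinitely many $n$, the map sending each such $y_n\mapsto 1/p^n$ (and the other generators of $B$ to $0$) has infinite image in $\Q/\Z$, hence dense image in $\R/\Z$, and extends to $K$ by injectivity.

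The main obstacle lies in this last sub-case, where some structural input about reduced $p$-groups is essentially unavoidable; above I used basic subgroups. An alternative route, avoiding that machinery, would be a bare-hands recursive construction: inductively build $L_1\leq L_2\leq\cdots\leq K$ (each finite) with a compatible sequence of homomorphisms $\chi_n\colon L_n\to\Q/\Z$ whose images have elements of order tending to infinity, adjoining at each stage an element $y_{n+1}\in K$ of sufficiently large order and choosing $\chi_{n+1}(y_{n+1})$ of large order from among the admissible values (those solving $m\chi(y_{n+1})=\chi_n(my_{n+1})$ for $m=\operatorname{ord}(y_{n+1})/|\langle y_{n+1}\rangle\cap L_n|$). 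The union then extends to $K$ by injectivity, yielding the same conclusion.
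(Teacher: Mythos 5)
Your proof is correct, but it takes a genuinely different route from the paper's. For the non-torsion case the paper finds a nonzero map $K\to\Q$ by passing through $K\otimes\Q$ and a coordinate projection, then composes with $q\mapsto \alpha q+\Z$; you instead pick a single element of infinite order, map it to an irrational, and extend from the cyclic subgroup using injectivity of $\R/\Z$ — an equally valid shortcut. The more substantial divergence is in the torsion case: the paper builds the homomorphism $K\to\Q/\Z$ directly by transfinite recursion, choosing a well-ordering $\{a_i\}$ with $\chi(a_i)>i\prod_{j<i}\chi(a_j)$ (so that elements of rapidly growing order appear at the start) and at each step sending $a_\alpha\mapsto \frac{g_\alpha(ka_\alpha)+1}{k}$, which guarantees density without invoking any structure theory or injectivity. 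You instead decompose $K$ into $p$-primary components, split off the divisible part, and in the reduced $p$-group case appeal to Kulikov's theorem on basic subgroups, with injectivity of $\R/\Z$ used repeatedly to extend partial maps. Both work; the paper's construction is self-contained and elementary at the price of a somewhat delicate well-ordering argument, while yours is conceptually cleaner but relies on nontrivial classification machinery (primary decomposition, divisible/reduced splitting, basic subgroups). Your concluding sketch of a bare-hands recursive alternative is in fact close in spirit to what the paper actually does.
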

\begin{proof} Suppose that $K$ is a non-torsion group. First, we find a nontrivial homomorphism $g'\colon K \to \Q$. Consider $K \otimes \Q$. This is a nontrivial $\Q$-vector space. For a basis element $x$ of $K \otimes \Q$, let $\pi_x\colon K \otimes \Q \to \Q$ be the projection along $x$. There is a natural homomorphism $\otimes_1\colon K \to K \otimes \Q$ given by $a \mapsto a\otimes 1$. Let $g'$ be the composite of $\otimes_1$ and $\pi_x$. Then $g'\colon K \to \Q$ is nontrivial. Take any irrational $\alpha\in\R\setminus\Q$ and consider the mapping $g''\colon\Q\to \alpha\cdot\Q/\Z\subset\R/\Z$ given by $g''(q)=\alpha\cdot q + \Z$. The map $g''\circ g'$ has dense image in $\R/\Z$.

Suppose now that $K$ is a torsion group with unbounded exponent. We construct a homomorphism $g\colon K \to \Q/\Z$ with dense image. 

Write $\chi(a)$ for the order of $a$ in $K$.

We will construct this map explicitly. Well-order $K$ as $\{a_i: i \in \gamma\}$. Specifically choose this well-ordering such that for $i \in\N$ we have $\chi(a_i) > i \Pi_{j < i}{\chi(a_j)}$.

Define a nested chain of maps $\{g_i: i \in \gamma\}$. Each $g_i$ will be a homomorphism $\langle a_j: j < i \rangle \to \Q/\Z$. Call the domain of such a map $K_i$. We get these as follows.
\begin{itemize}
\item $g_0$ is the trivial map.
\item $g_\lambda = \bigcup_{\beta < \lambda}g_\beta$ when $\lambda$ is a limit.
\item $g_{\alpha + 1}$ is the extension of $g_\alpha$ to $K_{\alpha+1}$ given by taking $a_\alpha$ to $\frac{g_\alpha(ka_\alpha) + 1}{k}$, where $k$ is the least natural number such that $ka_\alpha \in K_\alpha$. 
\end{itemize}

We see that any $x \in K_{\alpha + 1} = \langle K_\alpha, a_\alpha \rangle = K_\alpha + \Z a_\alpha$ can be expressed in the form $m +s a_\alpha$, where $m \in K_\alpha$ and $s\in\Z$. We have $g_{\alpha + 1}(x) := g_\alpha(m) + s\frac{g_\alpha(ka_\alpha)+1}{k}$, which is a linear definition as $g_{\alpha + 1}(x + x') = g_{\alpha + 1}(x) + g_{\alpha + 1}(x')$.

Each map is well-defined. Suppose we have some $x \in K_{\alpha+1}$, where $x = m + sa_\alpha = m' + s'a_\alpha$. Then $g_{\alpha + 1}(m + sa_\alpha) - g_{\alpha + 1}(m' + s'a_\alpha) = g_\alpha(m) - g_\alpha(m') + (s - s')\frac{g_\alpha(ka_\alpha)+1}{k}$. We know that $k$ divides $s - s'$ because $(s - s')a_\alpha \in K_\alpha$, so the $\frac{(s-s')}{k}$ vanishes, since it is an integer, and we get $g_\alpha(m - m') + g_\alpha((s -s')a_\alpha)$, but $m - m' = (s' - s)a_\alpha$, so that sum vanishes.

This is sufficient to show that each map is a homomorphism, as is $g = \bigcup{g_\alpha}$.

To show that it is dense it suffices to show that for each $n$ there is some $q$ in the image with $q < 1/n$. Consider where $a_i$ goes. Since $\chi(a_i) > i \Pi_{j < i}{\chi(a_j)}$, and we know that for all $x\in K_\alpha$, $\chi(x) \leq \Pi_{j < i}{\chi(a_j)}$, so $k \geq i$. Element $a_i$ will then go to $\frac{r}{k}$ where $0 < r \leq \frac{3}{2}$, and this is at most $\frac{3}{2k}$.
\end{proof}

\begin{corollary} \label{cor:ab}
\begin{enumerate}
\item Suppose that an infinite finitely generated group $G$ has a finite index subgroup $H$ with infinite abelianization $H^{\rm ab}$. Then, the conclusion of Proposition \ref{prop:dense} holds in $G$. That is, there exists a thick subset $P$ of $G$ such that for a sufficiently saturated extension $\G^*$ of the structure $\G = (G,\cdot,P)$, ${\G^*}^{00}_{\emptyset}$ is strictly smaller that ${\G^*}^0_{\emptyset}$.
\item If $G$ is an infinite finitely generated virtually solvable group, then there exists a finite index subgroup $H<G$ with infinite $H^{\rm ab}$. Thus the conclusion of Proposition \ref{prop:dense} holds in $G$.
\end{enumerate}
\end{corollary}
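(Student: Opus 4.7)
\medskip

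\noindent\textbf{Proof plan.} For the existence of $H$ in part~(2), I would induct on solvability degree. If $G$ is abelian, $H:=G$ works. Otherwise either $G^{\rm ab}$ is infinite, in which case $H:=G$ works again, or $G^{\rm ab}$ is finite; in the latter case the derived subgroup $G^{(1)}$ is finitely generated, infinite, and solvable of strictly smaller derived length, so induction produces $H\le G^{(1)}$ of finite index with $H^{\rm ab}$ infinite, and this $H$ has finite index in $G$. The virtually solvable case reduces to the solvable one by first passing to a solvable finite-index subgroup. The consequence stated in~(2) then follows from~(1).

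For part~(1), Lemma~\ref{lem:dense} applied to $H^{\rm ab}$ (an infinite finitely generated, hence non-torsion, abelian group) produces a homomorphism $g\colon H\to\R/\Z$ with dense image. I would set $P:=g^{-1}(-t,t)\subseteq H$ for some fixed $t\le 1/3$, viewed as a subset of $G$, and work inside $\G=(G,\cdot,P)$. First, $P$ must be thick in $G$: by Lemma~\ref{lem:gint}(1), $P$ is $n$-thick in $H$, and a pigeonhole argument — any sequence in $G$ of length $[G:H](n-1)+1$ contains $n$ elements in a common coset $aH$, whose left-translates by $a^{-1}$ form an $n$-sequence in $H$ — upgrades this to thickness of $P$ in $G$. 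Symmetry of $P$ is immediate since $(-t,t)$ is symmetric.

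The main observation is that one need not extend $g$ to all of $G$ to reproduce the strategy of Proposition~\ref{prop:dense}. Since $P$ is thick in $H$, it is right generic there (Lemma~\ref{lem:thick}(4)), so by Lemma~\ref{lem:gen} some fixed power $P^{3k-2}$ equals $H$; this makes $H$ itself $\emptyset$-definable in $\G$. The recursion $X(t/m)\cap H = P\cap\{x\in H: mx\in P\}$ from Lemma~\ref{lem:gint}(4) then makes each set $X(t/m)\cap H$ $\emptyset$-definable in $\G$. Taking
\[\widetilde{\G}\ :=\ \bigcap_{m\in\N}\bigl(X(t/m)\cap H\bigr)^*\]
inside $\G^*$, symmetry together with $(X(t/(2m))\cap H)^2\subseteq X(t/m)\cap H$ (Lemma~\ref{lem:gint}(2)) makes $\widetilde\G$ a $\emptyset$-type-definable subgroup, and thickness of each factor gives bounded index via Fact~\ref{fact:known}(1); hence ${\G^*}^{00}_\emptyset\le\widetilde{\G}$.

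Finally I would establish $\widetilde\G\cap{\G^*}^0_\emptyset\lneqq{\G^*}^0_\emptyset$ by the compactness argument from Proposition~\ref{prop:dense}: equality would force $P\supseteq{\G^*}^0_\emptyset$, so $P$ would contain some $\emptyset$-definable finite-index subgroup $K$ of $\G^*$, whose trace on $G$ would be a finite-index subgroup of $G$ contained in $P$; but by Lemma~\ref{lem:gint}(3) the maximal subgroup of $G$ lying in $P$ is $\ker g$, which has infinite index since $\im(g)$ is dense and thus infinite in $\R/\Z$. The main obstacle I anticipate is precisely this mismatch of domains — the dense-image homomorphism only lives on the subgroup $H$, not on $G$ — and its resolution, sketched above, is to recover $H$ and the auxiliary sets $X(t/m)\cap H$ definably from $P$ alone via Lemma~\ref{lem:gen}, rather than attempting to extend $g$.
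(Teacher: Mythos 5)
Your proof is correct and tracks the paper's argument closely; the differences are organizational rather than substantive. In part~(1), the paper applies Proposition~\ref{prop:dense} directly to $H$ equipped with the structure $(H,\cdot,X(t))$, obtaining the strict inequality of components for $H^*$, and then transfers this to $\G=(G,\cdot,P)$ by noting that $H=P^n$ is $\emptyset$-definable of finite index, so that ${\G^*}^0_\emptyset={H^*}^0_\emptyset$ and ${\G^*}^{00}_\emptyset={H^*}^{00}_\emptyset$. You instead re-run the proof of Proposition~\ref{prop:dense} inside $\G$ itself: you show that $H$ and each $X(t/m)$ are $\emptyset$-definable from $P$ alone (via Lemmas~\ref{lem:gen} and~\ref{lem:gint}), construct the type-definable $\widetilde{\G}$ in $\G^*$ directly, and close with the same compactness contradiction against Lemma~\ref{lem:gint}(3). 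This avoids invoking the standard identification of the connected components of $G^*$ with those of a definable finite-index subgroup, at the cost of a somewhat longer write-up; it also requires you to repeat the pigeonhole upgrade of thickness from $H$ to $G$ for each $X(t/m)$, not just for $P$, which you should state explicitly when invoking Fact~\ref{fact:known}(1). For part~(2), your induction on derived length constructs exactly the subgroup the paper selects directly as $H_0^{(n)}$ for the least $n$ with $[H_0^{(n)}:H_0^{(n+1)}]$ infinite; these are the same argument in different dress.
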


\begin{proof}
$(1)$ By \cite[14.3.2]{km}, a finite index subgroup of a finitely generated group is also finitely generated. Hence $H$ and $H^{\rm ab}$ are finitely generated, so $H^{\rm ab}$ cannot be a torsion group with bounded exponent. Lemma \ref{lem:dense} gives us a homomorphism $H^{\rm ab}\to \R/\Z$ with dense image and hence a homomorphism $H \to \R/\Z$ with dense image. We can then apply Proposition \ref{prop:dense} to get thick $P=X(t)$, for some $t\in \left(0,\frac{1}{3}\right]$. Note that as $H=P^{n}$, for some $n\in\N$, $H$ is $\emptyset$-definable in $(G,\cdot,P)$, hence ${\G^*}^0_{\emptyset}={H^*}^0_{\emptyset} \gneqq {H^*}^{00}_{\emptyset}= {\G^*}^{00}_{\emptyset}$. Since $[G:H]$ is finite, $P$ is also thick in $G$.

$(2)$ Let $H_0<G$ be a solvable group of finite index. Consider the derived series $H_0^{(n)}$ of $H_0$. Since $G$ is infinite, there is a least $n$ such that $H_0^{(n+1)}=\left[H_0^{(n)},H_0^{(n)}\right]$ has infinite index in $H_0^{(n)}$. Take $H:=H_0^{(n)}$.
\end{proof}

Note that for a torsion group of bounded exponent (say $k$) there is no homomorphism to $\R/\Z$ with dense image, because no such homomorphism to $\R/\Z$ can take any value in $\left(0, \frac{1}{k}\right)$.

\begin{problem} \label{prob:cantor}
Describe the thick sets in infinite abelian groups of bounded exponent.
\end{problem}

 



\section{Optimality of the van der Waerden theorem}

The infinite version of the van der Waerden theorem \cite[2.1]{grs} states that for any partition of $\Z$ into finitely many pieces $A_0,\ldots,A_r$, one of the $A_i$ contains arbitrarily long arithmetic progressions. Then the set $A_i - A_i$ contains sets of the form $m[-N,N]\cap\Z$ for arbitrarily large $N$. Consider the set \[P = \bigcup_{0\leq i\leq r}A_i-A_i.\] Hence $P$ contains sets of the form $m[-N,N]\cap\Z$ for arbitrarily large $N$. Observe that this set is $(r+2)$-thick: consider a $r+2$-sequence: by the pigeonhole principle two elements of this sequence must fall in the same $A_i$, so their difference lies in $P$. The following question arises: does $P$ always contain $m\Z$ for some $m$? or maybe some finite sum $P+\ldots+P$ contains $m\Z$? Using the sets $X_{\alpha}(t)$ from the previous section we can prove that the answer is negative, so the van der Waerden theorem cannot be strengthened in this way.

\begin{theorem} \label{thm:waerden}
Suppose that $G$ is an abelian group which has a homomorphism $g\colon G\to \R/\Z$ with dense image as in Lemma \ref{lem:dense} and $n\in\N_{\geq 1}$.
\begin{enumerate}
\item There is a covering $G = A_0\cup\ldots\cup A_{3n}$ such that 
\[\ker(g) \text{ is the largest subgroup of }G\text{ contained in }P^n, \tag{\textreferencemark}\]
where $P = \bigcup_{i < 3n+1}A_i-A_i$, and $P^{\left\lfloor\frac{3n}{2} \right\rfloor+1} =G$. In particular, no finite index subgroup of $G$ is contained in $P^n$.
\item If moreover $G$ is non-torsion, then there is a covering $G = A_0\cup\ldots\cup A_{2n}$ with property {\rm (\textreferencemark)}, and such that $P^{n+1} = G$.
\end{enumerate}
\end{theorem}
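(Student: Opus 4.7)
My plan is to construct the coverings explicitly by pulling back suitable finite covers of the circle $\R/\Z$ under $g$, and to control $P$ and its powers through Lemma~\ref{lem:gint}.

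For part $(1)$, I would fix a small $\varepsilon > 0$ and choose $3n+1$ open arcs $I_0,\dots,I_{3n}$ of common length $\ell = \frac{1}{3n+1}+\varepsilon$ covering $\R/\Z$ (with slight overlap), and set $A_i = g^{-1}(I_i)$. The key step is the identity $A_i - A_i = X(\ell)$: the inclusion $\subseteq$ is immediate, while the reverse uses density of $\im(g)$, since for any $x$ with $g(x) \in (-\ell,\ell)$ the open arc $I_i \cap (I_i + g(x)) \subseteq \R/\Z$ has positive length and hence meets $\im(g)$; if $g(y)$ lies in it, then $y, y - x \in A_i$. This gives $P = X(\ell)$.

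Iterating Lemma~\ref{lem:gint}(2) gives $P^k = X(k\ell)$ as long as $k\ell \le \tfrac12$, and a parallel density argument extends this to $P^k = G$ once $k\ell > \tfrac12$: to write $x = y_1 + \cdots + y_k$, choose $g(y_1)$ in the open arc $(-\ell,\ell) \cap (g(x) - (k-1)\ell,\, g(x) + (k-1)\ell)$ of $\R/\Z$, which has positive length precisely because $\ell + (k-1)\ell > \tfrac12$, then proceed by induction. Choosing $\varepsilon$ small enough that $n\ell < \tfrac13$, Lemma~\ref{lem:gint}(3) then yields that every subgroup of $P^n \subseteq X(\tfrac13)$ lies in $\ker(g)$; combined with $\ker(g) \subseteq A_i - A_i \subseteq P^n$, this makes $\ker(g)$ the largest subgroup contained in $P^n$. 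Its index is infinite because $\im(g)$ is. Finally, $\bigl(\lfloor \tfrac{3n}{2}\rfloor + 1\bigr)\ell > \tfrac12$ is a short calculation: strict already at $\varepsilon = 0$ when $n$ is even, and requiring the slack $\varepsilon > 0$ when $n$ is odd.

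For part $(2)$, non-torsionness of $G$ lets me invoke Lemma~\ref{lem:dense}'s explicit construction and choose $g$ factoring through $\alpha\Q/\Z$ for some irrational $\alpha$, so that $\im(g)$ is torsion-free. Any nontrivial subgroup of $\im(g)$ then contains an element of infinite order, which in $\R/\Z$ generates a dense subgroup and so cannot lie in $(-t,t)$ for any $t < \tfrac12$. This upgrades Lemma~\ref{lem:gint}(3) to hold for all $t < \tfrac12$. I then run the same construction with $2n+1$ arcs of length $\ell = \frac{1}{2n+1}+\varepsilon$: since $n\ell < \tfrac12$ for small $\varepsilon$, the upgraded Lemma~\ref{lem:gint}(3) still gives $\ker(g)$ as the largest subgroup of $P^n = X(n\ell)$, and $(n+1)\ell > \tfrac12$ forces $P^{n+1} = G$.

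The main obstacle is the equality $P = X(\ell)$ and the extension $P^k = G$ for $k\ell > \tfrac12$: both rely crucially on locating elements of $G$ with prescribed image in small arcs of $\R/\Z$, and hence on the density hypothesis. The boundary case where $n$ is odd in part $(1)$ is what makes the overlap $\varepsilon > 0$ essential; without it $P^k$ would miss exactly the fibre $g^{-1}(\tfrac12)$.
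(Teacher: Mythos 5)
Your proposal is correct and follows essentially the same strategy as the paper: cover $G$ by preimages of short arcs under $g$, observe that $P$ collapses to a single set of the form $X(\ell)$, iterate Lemma~\ref{lem:gint}$(2)$ to compute powers of $P$, and invoke Lemma~\ref{lem:gint}$(3)$ (and, in part $(2)$, the torsion-freeness of $\im(g)$) to pin down the maximal subgroup in $P^n$. The paper phrases the cover via genericity — it sets $Q = X\!\left(\frac{1}{6n}\right)$ (resp.\ $X\!\left(\frac{2}{8n+1}\right)$), notes it is $(3n+1)$- (resp.\ $(2n+1)$-)generic, and takes $A_i = Q + a_i$, which makes $P = Q+Q = X\!\left(\frac{1}{3n}\right)$ (resp.\ $X\!\left(\frac{4}{8n+1}\right)$) with no $\varepsilon$-slack needed — but this is the same construction in thin disguise, since each $A_i$ is then the preimage of an arc of length $\frac{1}{3n}$. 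Your choice $\ell = \frac{1}{3n+1}+\varepsilon$ is what forces you to track the $\varepsilon$-bookkeeping (e.g.\ for odd $n$ in $(1)$ and to keep $n\ell < \tfrac13$); picking $\ell = \frac{1}{3n}$ directly, as the paper does, makes $P^n = X(\tfrac13)$ exactly and $\left(\lfloor\tfrac{3n}{2}\rfloor+1\right)\ell > \tfrac12$ automatically. Otherwise the two proofs coincide.
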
 
\begin{proof}
$(1)$ Denote $X(t)=g^{-1}(-t,t)$, for $t\in \left(0,\frac{1}{2}\right]$ (see the proof of Proposition \ref{prop:dense}). Let $Q=X\left(\frac{1}{6n}\right)$. Then, since $\left(-\frac{1}{6n},\frac{1}{6n}\right)$ is $(3n+1)$-generic in $\R/\Z\cong \left[-\frac{1}{2},\frac{1}{2}\right)$ and $\im(g)$ is dense in $\R/\Z$, the set $Q$ is $(3n+1)$-generic in $G$. Thus one can find $a_0,\ldots,a_{3n}$ from $G$ such that $A_i = Q+a_i$, for $i\leq 3n$ is a covering of $G$. Hence $P=Q+Q=X\left(\frac{1}{3n}\right)$, so $P^n = X\left(\frac{1}{3}\right)$ has the desired properties. Moreover, $P^{\left\lfloor\frac{3n}{2}\right\rfloor+1} = g^{-1}\left(\R/\Z\right)=G$.

$(2)$ If $G$ is non-torsion, then we may assume by modifying $g$ that the image of $g$ has trivial intersection with $\Q/\Z$ (see the first case of the proof of Lemma \ref{lem:dense}). It suffices to take $Q=X\left(\frac{2}{8n+1}\right)$ and $P=Q+Q$ (noting that $\left(-\frac{2}{8n+1},\frac{2}{8n+1}\right)$ is $(2n+1)$-generic in $\R/\Z$). Then $P^n = X\left(\frac{4n}{8n+1}\right)$, and cannot contain any dense subgroups of $\R/\Z$. Any $u$ with $g(u) \not = 0$ will have irrational image and generate a dense subset of $\R/\Z$, so $\ker(g)$ is the largest subgroup of $G$ contained in $P^n$.
\end{proof}

The Hales-Jewett \cite[2.2]{grs} theorem states that for any finite number of colours and finite length $l$ one can pick a number $n$ such that any coloured $n$-dimensional hypercube with sides of length $l$ contains a monochromatic combinatorial line, namely a set of $l$ points where some coordinates are fixed and the others vary together from $1$ to $l$.

\begin{corollary}
\begin{enumerate}
\item For a given $n$ there exists a covering $\Z = A_0\cup\ldots\cup A_{2n}$ such that $P^n$ has no nontrivial subgroups, where $P = \bigcup_{i < 2n+1}A_i-A_i$.
\item For a given $n$ there exists a covering $\Q^\omega = A_0\cup\ldots\cup A_{2n}$ such that $P^n$ has no nontrivial subgroups.

\item In general, for an abelian group $G$ with cardinality $\leq 2^{\aleph_0}$ there is a covering of $G$ by $2n+1$ sets such that $P^n$ contains no infinite cyclic subgroup of $G$.
\end{enumerate}
\end{corollary}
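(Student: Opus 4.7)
My plan is to deduce all three parts from Theorem \ref{thm:waerden}(2). Inspection of its proof shows that the only properties used of the homomorphism $g$ are that it has dense image and that $g(G)\cap\Q/\Z=\{0\}$; given any $g$ with these two properties, the conclusion is a covering $G=A_0\cup\cdots\cup A_{2n}$ whose associated $P$ satisfies property (\textreferencemark), i.e.\ $\ker g$ is the largest subgroup of $G$ contained in $P^n$. So in each part it suffices to construct such a $g$ whose kernel has the desired shape.

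For (1), take $g=g_\alpha$ from Example \ref{ex:alpha} for any irrational $\alpha$: it is injective, and $g(\Z)\cap\Q/\Z=\{0\}$ since $n\alpha\in\Q$ would force $\alpha\in\Q$. Then $\ker g=0$, and Theorem \ref{thm:waerden}(2) gives that $P^n$ contains no nontrivial subgroup.

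For (2) and (3) the key algebraic ingredient is that the inclusion $\Q/\Z\hookrightarrow\R/\Z$ splits, because $\Q/\Z$ is divisible and hence injective as a $\Z$-module; this yields an abstract abelian-group decomposition $\R/\Z\cong\Q/\Z\oplus K$ where $K\cong\R/\Q$ is a $\Q$-vector space of dimension $2^{\aleph_0}$ meeting $\Q/\Z$ only at $0$. For (2), $\Q^\omega$ is a torsion-free $\Q$-vector space of dimension at most $2^{\aleph_0}$, so one picks any $\Q$-linear injection $g\colon \Q^\omega\hookrightarrow K\subseteq\R/\Z$. Its image meets $\Q/\Z$ only at $0$ by construction, and the image of any nonzero element is irrational and hence generates a dense cyclic subgroup of $\R/\Z$, so $g$ has dense image. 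Since $\ker g=0$, Theorem \ref{thm:waerden}(2) gives the claim.

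For (3) the torsion case is vacuous: a torsion group contains no infinite cyclic subgroup. If $G$ is non-torsion, let $T\leq G$ be its torsion subgroup; then $G/T$ is torsion-free of cardinality $\leq 2^{\aleph_0}$ and embeds into its divisible hull $D=(G/T)\otimes_{\Z}\Q$, a $\Q$-vector space of dimension $\leq 2^{\aleph_0}$. Composing $G\twoheadrightarrow G/T\hookrightarrow D\hookrightarrow K\hookrightarrow\R/\Z$ yields $g$ with $\ker g=T$, image inside $K$, and dense image by the same irrational-generator argument. Theorem \ref{thm:waerden}(2) then produces a covering in which the largest subgroup of $G$ contained in $P^n$ is $\ker g=T$; since $T$ is torsion, $P^n$ contains no infinite cyclic subgroup of $G$. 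The only mildly subtle point is the bookkeeping that Theorem \ref{thm:waerden}(2) applies to any $g$ with the two required properties rather than only the specific one from Lemma \ref{lem:dense}, which is immediate on inspection of its proof.
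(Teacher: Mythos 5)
Your proposal is correct and matches the paper's own route: both deduce everything from Theorem~\ref{thm:waerden}(2) via a homomorphism to $\R/\Z$ whose image avoids $\Q/\Z$, using that $\R/\Z$ contains a continuum-dimensional $\Q$-vector space disjoint (except at $0$) from $\Q/\Z$, and handling torsion in (3) trivially. You merely spell out explicitly the splitting $\R/\Z\cong\Q/\Z\oplus K$ and the observation that Theorem~\ref{thm:waerden}(2) applies to any $g$ with dense image meeting $\Q/\Z$ only at $0$, which the paper leaves implicit, and you supply a short direct argument for (1) using $g_\alpha$ which the paper omits.
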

Part $(2)$ of the corollary is the closest analogue, in our context, to the objects appearing in the Hales-Jewett Theorem, and implies the same for $\Z^\omega$.
\begin{proof}
$(2)$ Apply Theorem \ref{thm:waerden}, noting that there is a map $\Q^\omega \rightarrow (\R\setminus \Q) \cup \{0\}$ with trivial kernel as the latter includes a continuum-dimensional $\Q$-vector space. 

$(3)$ If $G$ is torsion take $P=G$. Otherwise take $G\otimes \Q$, which is a vector space of a dimension at most $2^{\aleph_0}$ and apply $(2)$.
\end{proof}

\begin{question}
Each case of the preceding corollary gives us a covering by $2n + 1$ sets. Do the same results also hold for coverings by fewer sets?
\end{question}

\begin{proposition}
Let $\kappa$ be any cardinal greater than $2^{\aleph_0}$ and $G$ be an abelian torsion free group of cardinality $\kappa$. Then for any generic subset $P$ of $G$ (for example $P = \bigcup_{i < n}A_i-A_i$, for some partition $G = \bigcup_{i < n}A_i$) there is nontrivial $x\in G$ such that $x\Z$ is contained in $P - P$.
\end{proposition}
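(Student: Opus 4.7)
The plan is a straightforward pigeonhole argument on the cosets determined by a fixed generic cover, exploiting the fact that there are only $2^{\aleph_0}$ functions from $\Z$ to a finite set while $G$ has strictly greater cardinality.

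Since $P$ is generic, fix $n \in \N$ and $a_0,\ldots,a_{n-1}\in G$ with $G = \bigcup_{i<n}(P+a_i)$. For each $g\in G$ define a function $f_g\colon \Z \to \{0,\ldots,n-1\}$ by choosing, for every $k\in\Z$, some index $f_g(k)$ with $kg \in P + a_{f_g(k)}$. The set of all such functions has cardinality at most $n^{\aleph_0} = 2^{\aleph_0} < \kappa$, so two distinct elements $g,g'\in G$ must yield the same function: $f_g = f_{g'}$.

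Set $x := g-g'$. Since $g\neq g'$ and $G$ is torsion free, $x$ is nontrivial and generates an infinite cyclic subgroup $x\Z$. For each $k\in\Z$, both $kg$ and $kg'$ lie in $P + a_{f_g(k)}$, so their difference $kx = kg - kg'$ lies in $P - P$. Hence $x\Z \subseteq P-P$, as required.

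There is no real obstacle; the only point to check is that the cardinality arithmetic goes through, which it does because $n$ is finite. Note also that the hypothesis of torsion freeness is used only to ensure that the cyclic subgroup generated by $x$ is nontrivial (i.e.\ infinite) rather than possibly collapsing to a finite group; this matches the spirit of the question, where one is looking for an infinite cyclic subgroup inside $P-P$.
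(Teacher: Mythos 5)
Your proof is correct and follows essentially the same pigeonhole argument as the paper: fix a finite set of translates of $P$ covering $G$, record for each $g\in G$ which translate contains each integer multiple of $g$, and use the cardinality bound $2^{\aleph_0}<\kappa$ to find two elements $g\neq g'$ with the same record, so that $x=g-g'$ works. The only cosmetic differences are that the paper phrases the covering as $G=P+S$ and indexes the sequence over $\N$ rather than $\Z$ (which suffices since $P-P$ is symmetric).
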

\begin{proof}
Let $S$ be a finite set such that $G=P+S$. For each $g$ in $G$, we define a sequence $(s(g)_k)_{k\in\N}$ of elements of $S$, chosen such that $k\cdot g - s(g)_k \in P$. The number of possible sequences is at most $2^{\aleph_0}$, so we can find distinct $g,h \in G$ such that $s(g)_k = s(h)_k$ for each $k$. Setting $x = h - g$ we have that for each $k$ \[kx = (k\cdot h - s(h)_k) - (k\cdot g - s(g)_k) \in P - P,\] and so $x\Z$ is contained in $P - P$.
\end{proof}

In fact, by a similar argument, if $|G|>2^{\lambda}$ one can find a $\lambda$-dimensional $\Z$-module in $P-P$.

\begin{acknowledgements}
We would like to thank both anonymous referees for their helpful feedback and especially for bringing Corollary \ref{cor:poi} to our attention. We would also like to thank Antongiulio Fornasiero for asking interesting questions related to the subject of the paper  and Anand Pillay for some ideas which we used in Example \ref{ex:int}.
\end{acknowledgements}


\begin{thebibliography}{aaa}

\bibitem{pillay}  Casanovas E., Lascar D., Pillay A., Ziegler M.: \emph{Galois groups of first order theories.} J. Math. Log. 1, 305 --- 319 (2001)

\bibitem{grs} Graham, R. L., Rothschild, B. L., Spencer, J. H.: \emph{Ramsey theory.} Second edition. John Wiley \& Sons, Inc., New York, 1990

\bibitem{abscon} Gismatullin J.: \emph{Absolute connectedness and classical groups.}, preprint 2011, \verb+arXiv:1002.1516+

\bibitem{modcon} Gismatullin J.: \emph{Model theoretic connected components of groups.}, Israel J. Math. 184, 251 --- 274 (2011)

\bibitem{gomp} Gismatullin J., Newelski L.: \emph{$G$-compactness and groups.} Arch. Math. Logic. 47, 479 --- 501 (2008)

\bibitem{hrush} Hrushovski E., Kropholler P. H., Lubotzky A., Shalev A.: \emph{Powers in finitely generated groups.} Trans. Amer. Math. Soc. 348, 291 --- 304 (1996)

\bibitem{km} Kargapolov M. I., Merzljakov Ju. I.: \emph{Fundamentals of the theory of groups.} Graduate Texts in Mathematics, 62. Springer-Verlag, New York-Berlin (1979)

\bibitem{mal} Mal'cev A. I.: \emph{On Homomorphisms Onto Finite Groups.} Uchen. Zap. Ivanov. Gos. Ped. Inst., 18, 49 --- 60 (1958), English translation: \emph{Twelve papers in algebra.} Edited by Lev J. Leifman. American Mathematical Society Translations, Series 2, 119. American Mathematical Society, Providence, R.I., 1983, 67 --- 79

\bibitem{marker} Marker D.: \emph{Model theory.} Graduate Texts in Mathematics, 217 Springer-Verlag, New York (2002)

\bibitem{mar94} Martinez C.: \emph{On power subgroups of profinite groups.} Trans. Amer. Math. Soc. 345, 865 --- 869 (1994)

\bibitem{miler} Miller C.: \emph{Expansions of dense linear orders with the intermediate value property.} J. Symbolic Logic 66 (2001), no. 4, 1783 –-- 1790

\bibitem{pill} Pillay A.: \emph{Type-definability, compact Lie groups, and o-minimality.} J. Math. Log. 4 , 147 --- 162 (2004)

\end{thebibliography}
\end{document}